    \newtheorem{theorem}{Theorem}
    \newtheorem{corollary}[theorem]{Corollary}
    \theoremstyle{definition}
    \newtheorem{definition}[theorem]{Definition}
    \theoremstyle{remark}
    \newtheorem{remark}[theorem]{Remark}
    \newtheorem{example}[theorem]{Example}
    \newcommand{\FF}{\mathbb{F}}
    \newcommand{\NN}{\mathbb{N}}
    \newcommand{\QQ}{\mathbb{Q}}
    \newcommand{\ZZ}{\mathbb{Z}}
    \newcommand{\CC}{\mathbb{C}}
    \newcommand{\PP}{\mathbb{P}}
    \newcommand{\fullED}{\varkappa}
    \newcommand{\jac}{\mathcal{J}_{C}}
    \newcommand{\heltal}[1]{\mathfrak{O}_{#1}}
    \newcommand{\frob}{\varphi}
    \DeclareMathOperator{\Div}{Div}
    \DeclareMathOperator{\divisor}{div}
    \DeclareMathOperator{\Mat}{Mat}
    \DeclareMathOperator{\diag}{diag}
    \newcommand{\nummerkontrol}[1]{}
\begin{document}

\title{Non-Cyclic Subgroups of Jacobians of Genus~Two~Curves}

\author[C.R. Ravnshøj]{Christian Robenhagen Ravnshøj}

\address{Department of Mathematical Sciences \\
University of Aarhus \\
Ny Munkegade \\
Building 1530 \\
DK-8000 Aarhus C}

\email{cr@imf.au.dk}

\thanks{Research supported in part by a PhD grant from CRYPTOMAThIC}

\keywords{Jacobians, hyperelliptic genus two curves, pairings, embedding degree, supersingular curves}

\subjclass[2000]{11G20 (Primary) 11T71, 14G50, 14H45 (Secondary)}


\begin{abstract}
Let~$E$ be an elliptic curve defined over a finite field. Balasubramanian and Koblitz have proved that if the
$\ell^\text{th}$ roots of unity $\mu_\ell$ is not contained in the ground field, then a field extension of the ground
field contains $\mu_\ell$ if and only if the $\ell$-torsion points of~$E$ are rational over the same field extension.
We generalize this result to Jacobians of genus two curves. In particular, we show that the Weil- and the Tate-pairing
are non-degenerate over the \emph{same} field extension of the ground field.

From this generalization we get a complete description of the $\ell$-torsion subgroups of Jacobians of supersingular
genus two curves. In particular, we show that for $\ell>3$, the $\ell$-torsion points are rational over a field
extension of degree at most $24$.
\end{abstract}

\maketitle

\section{Introduction}

In \cite{koblitz87}, Koblitz described how to use elliptic curves to construct a public key cryptosystem. To get a more
general class of curves, and possibly larger group orders, Koblitz \cite{koblitz89} then proposed using Jacobians of
hyperelliptic curves. After Boneh and Franklin \cite{boneh-franklin} proposed an identity based cryptosystem by using
the Weil-pairing on an elliptic curve, pairings have been of great interest to cryptography~\cite{galbraith05}. The
next natural step was to consider pairings on Jacobians of hyperelliptic curves. Galbraith \emph{et
al}~\cite{galbraith07} survey the recent research on pairings on Jacobians of hyperelliptic curves.

The pairing in question is usually the Weil- or the Tate-pairing; both pairings can be computed with Miller's algorithm
\cite{miller-algorithm}. The Tate-pairing can be computed more efficiently than the Weil-pairing, cf.
\cite{galbraith01}. Let~$C$ be a smooth curve defined over a finite field $\FF_q$, and let~$\jac$ be the Jacobian
of~$C$. Let $\ell$ be a prime number dividing the number of $\FF_q$-rational points on the Jacobian, and let $k$ be the
multiplicative order of $q$ modulo $\ell$. By \cite{hess}, the Tate-pairing is non-degenerate
on~$\jac(\FF_{q^k})[\ell]$. By \cite[Proposition~8.1, p.~96]{sil}, the Weil-pairing is non-degenerate on~$\jac[\ell]$.
So if~$\jac[\ell]$ is not contained in~$\jac(\FF_{q^k})$, then the Tate pairing is non-degenerate over a possible
smaller field extension than the Weil-pairing. For elliptic curves, in most cases relevant to cryptography, the
Weil-pairing and the Tate-pairing are non-degenerate over the same field: let~$E$ be an elliptic curve defined
over~$\FF_p$, and consider a prime number $\ell$ dividing the number of $\FF_p$-rational points on~$E$. Balasubramanian
and Koblitz \cite{balasubramanian} proved that
    \begin{equation}\label{eq:embeddingEllipticCurves}
    \text{\emph{if $\ell\nmid p-1$, then $E[\ell]\subseteq E(\FF_{p^k})$ if and only if $\ell\mid p^k-1$.}}
    \end{equation}
By Rubin and Silverberg \cite{rubin-silverberg07}, this result also holds for Jacobians of genus two curves in the
following sense: \emph{if $\ell\nmid p-1$, then the Weil-pairing is non-degenerate on $U\times V$, where
$U=\jac(\FF_p)[\ell]$, $V=\ker(\frob-p)\cap\jac[\ell]$ and $\frob$ is the $p$-power Frobenius endomorphism on~$\jac$}.

The result~\eqref{eq:embeddingEllipticCurves} can also be stated as: \emph{if $\ell\nmid p-1$, then
$E(\FF_{p^k})[\ell]$ is bicyclic if and only if $\ell\mid p^k-1$}. In \cite{ravnshoj}, the author generalized this
result to certain CM reductions of Jacobians of genus two curves. In this paper, we show that in most cases, this
result in fact holds for Jacobians of \emph{any} genus two curves. More precisely, the following theorem is
established.

\setcounter{section}{5}\setcounter{theorem}{5}

\begin{theorem}\nummerkontrol{\ref{theorem:main}}
Consider a genus two curve~$C$ defined over a finite field~$\FF_q$. Write the characteristic polynomial of the
$q^m$-power Frobenius endomorphism of the Jacobian~$\jac$ as
    $$P_m(X)=X^4+2\sigma X^3+(2q^m+\sigma^2-\tau)X^2+2\sigma q^mX+q^{2m},$$
where $2\sigma,4\tau\in\ZZ$. Let $\ell$ be an odd prime number dividing the number of $\FF_q$-rational points on
$\jac$, and with $\ell\nmid q$ and $\ell\nmid q-1$. If $\ell\nmid 4\tau$, then
    \begin{enumerate}
    \item~$\jac(\FF_{q^m})[\ell]$ is of rank at most two as a $\ZZ/\ell\ZZ$-module, and
    \item~$\jac(\FF_{q^m})[\ell]$ is bicyclic if and only if $\ell$ divides $q^m-1$.
    \end{enumerate}
\end{theorem}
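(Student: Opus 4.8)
The plan is to analyse the action of the $q^m$-power Frobenius $\psi = \frob^m$ on the four-dimensional $\FF_\ell$-vector space $V = \jac[\ell]$, since $\jac(\FF_{q^m})[\ell] = \ker(\psi - \mathrm{id})$ and the $\ZZ/\ell\ZZ$-rank in question equals $\dim_{\FF_\ell}\ker(\psi-\mathrm{id})$. Because $\jac(\FF_q)\subseteq\jac(\FF_{q^m})$, the hypothesis $\ell\mid\#\jac(\FF_q)$ gives $\ell\mid\#\jac(\FF_{q^m}) = P_m(1)$, so $1$ is a root of $P_m$ modulo $\ell$.

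I would first factor $P_m$ modulo $\ell$. The Weil functional equation pairs the roots as $\lambda,q^m/\lambda$, so $P_m \equiv (X^2 + aX + q^m)(X^2 + bX + q^m)$ with $a+b = 2\sigma$ and $ab = \sigma^2 - \tau$; hence $(a-b)^2 = 4\tau$, and the hypothesis $\ell\nmid 4\tau$ is precisely what guarantees $a\not\equiv b \pmod\ell$. Letting $f(X)=X^2+aX+q^m$ be the factor with root $1$ (its partner root being $q^m$) gives $a\equiv-(1+q^m)$ and $b = 2\sigma - a$, both in $\FF_\ell$, so the factorisation is defined over $\FF_\ell$. From $g(X)-f(X) = (b-a)X$ one gets $g(1)=f(1)+(b-a)=b-a\not\equiv 0$, so the second factor $g$ does not have $1$ as a root; as $f,g$ are then coprime, $V$ splits $\psi$-stably as $V_1\oplus V_2$ with each $V_i$ two-dimensional. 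Since $1$ is not an eigenvalue of $\psi$ on $V_2$, we have $\ker(\psi-\mathrm{id})\subseteq V_1$, and the rank is at most $\dim V_1 = 2$. This proves~(1).

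For~(2) I would separate the two cases. If $\ell\nmid q^m-1$, then $f$ has the distinct roots $1$ and $q^m$, so $\psi$ is diagonalisable on $V_1$ with $1$ occurring once; thus $\ker(\psi-\mathrm{id})$ is one-dimensional and $\jac(\FF_{q^m})[\ell]$ is cyclic. If $\ell\mid q^m-1$, then $f\equiv(X-1)^2$, so $V_1$ is the full generalised $1$-eigenspace of $\psi$, of dimension two, and the remaining task is to prove that $\psi$ acts as the identity on $V_1$, giving rank two.

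That final step is the main obstacle: a priori $\psi$ could act on $V_1$ as a single unipotent Jordan block, which would leave the rank at one, and the Weil pairing only shows $\psi|_{V_1}\in SL_2(\FF_\ell)$ (preserving the induced symplectic form because $q^m\equiv 1$), which does not exclude a Jordan block. To settle it I would descend from $\psi$ to $\frob$: the space $V_1$ is $\frob$-stable, and a $\frob$-eigenvalue $\beta$ contributes to $V_1$ exactly when $\beta^m=1$. By the analogous functional equation for $P_1$, both $1$ and $q$ are roots of $P_1$ modulo $\ell$, both satisfy $\beta^m\equiv 1$ (using $\ell\mid q^m-1$), and they are distinct because $\ell\nmid q-1$; since $\dim V_1 = 2$, these two eigenvalues must account for all of $V_1$, each with multiplicity one. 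Hence $\frob$ is diagonalisable on $V_1$ with the two distinct eigenvalues $1,q$, so $\psi|_{V_1} = (\frob|_{V_1})^m = \diag(1,q^m) = \mathrm{id}$, whence $\ker(\psi-\mathrm{id})=V_1$ has dimension two and $\jac(\FF_{q^m})[\ell]$ is bicyclic. Finally I would note that the Weil pairing supplies the conceptual ``only if'' direction directly: two independent $\FF_{q^m}$-rational $\ell$-torsion points that pair nontrivially have Weil pairing in $\mu_\ell\cap\FF_{q^m}$, forcing $\ell\mid q^m-1$ --- the bridge between the Weil- and Tate-pairing fields advertised in the introduction.
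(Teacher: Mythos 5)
Your proof is correct, but it takes a genuinely different route from the paper's. The paper argues by matrix representation and coefficient comparison: for (1) it assumes rank three, invokes a cited result to get $\ell\mid q^m-1$, forces $\bar{P}_m(X)=(X-1)^4$ and reads off $\ell\mid 4\tau$; for the forward direction of (2) it similarly forces $\bar{P}_m(X)=(X-1)^2(X-q^m)^2$; and for the reverse direction it appeals to non-degeneracy of the Tate-pairing over $\FF_{q^m}$ to conclude that the rank is at least two. You instead factor $\bar{P}_m=(X^2+aX+q^m)(X^2+bX+q^m)$ and note $(a-b)^2=4\tau$, which gives the hypothesis $\ell\nmid 4\tau$ a transparent meaning (the two quadratic factors remain distinct modulo $\ell$) and disposes of all ranks above two at once via the splitting $\jac[\ell]=V_1\oplus V_2$. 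More significantly, for the implication ``$\ell\mid q^m-1$ implies bicyclic'' you replace the Tate-pairing appeal by a Balasubramanian--Koblitz-style descent to $\frob$: the eigenvalues $1$ and $q$ are distinct because $\ell\nmid q-1$, both have eigenvectors lying in $V_1$, so $\frob|_{V_1}$ is diagonalisable and $\frob^m|_{V_1}=\mathrm{id}$. This is arguably more self-contained than the paper's step, since non-degeneracy of the Tate pairing by itself does not exclude a cyclic $\ell$-part (a cyclic group can have a non-trivial Tate self-pairing); what saves the conclusion is precisely the hypothesis $\ell\nmid q-1$, which your eigenvalue argument uses explicitly. Two small repairs: coprimality of $f$ and $g$ requires $g(q^m)\neq 0$ as well as $g(1)\neq 0$ (immediate, since $g(q^m)=q^m\,g(1)$); and your closing remark that the Weil pairing yields the ``only if'' direction is both dispensable and incomplete as stated, because a rank-two rational subgroup of $\jac[\ell]$ may be isotropic for the Weil pairing --- your eigenvalue count on $V_1$ already settles that direction.
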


If $\ell$ is a large prime number, then most likely $\ell\nmid 4\tau$, and Theorem~\ref{theorem:main} applies. In the
special case $\ell\mid 4\tau$ we get the following result.

\setcounter{section}{5}\setcounter{theorem}{6}

\begin{theorem}\nummerkontrol{\ref{theorem:main:supplement}}
Let notation be as in Theorem~\ref{theorem:main}. Furthermore, let $\omega_m$ be a $q^m$-Weil number of~$\jac$
(cf.~definition~\ref{definition:WeilNumber}), and assume that $\ell$ is unramified in $K=\QQ(\omega_m)$. Now assume
that~\mbox{$\ell\mid 4\tau$}. Then the following holds.
 \begin{enumerate}
 \item If $\omega_m\in\ZZ$, then $\ell\mid q^m-1$ and~$\jac[\ell]\subseteq\jac(\FF_{q^m})$.
 \item If $\omega_m\notin\ZZ$, then $\ell\nmid q^m-1$, $\jac(\FF_{q^m})[\ell]\simeq(\ZZ/\ell\ZZ)^2$ and
      ~$\jac[\ell]\subseteq\jac(\FF_{q^{mk}})$ if and only if $\ell\mid q^{mk}-1$.
 \end{enumerate}
\end{theorem}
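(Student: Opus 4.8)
The plan is to read the theorem off from the reduction of $P_m$ modulo $\ell$ and the resulting action of $\frob^m$ on $\jac[\ell]\simeq(\ZZ/\ell\ZZ)^4$. Since $\ell$ is odd, $\ell\mid 4\tau$ means $\tau\equiv 0\pmod\ell$, and comparing coefficients gives
\[P_m(X)\equiv(X^2+\sigma X+q^m)^2\pmod\ell.\]
Because $\ell\mid\#\jac(\FF_q)$ and $\jac(\FF_q)\subseteq\jac(\FF_{q^m})$, we have $\ell\mid\#\jac(\FF_{q^m})=P_m(1)$, hence $1+\sigma+q^m\equiv 0$, and therefore
\[P_m(X)\equiv(X-1)^2(X-q^m)^2\pmod\ell.\]
Thus the eigenvalues of $\frob^m$ on $\jac[\ell]$ are $1$ and $q^m$, each of algebraic multiplicity two, and the whole statement reduces to deciding whether these eigenvalues collide, i.e.\ whether $\ell\mid q^m-1$, together with the fact that $\frob^m$ acts semisimply on $\jac[\ell]$.

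For part~(1), $\omega_m\in\ZZ$ forces $K=\QQ(\omega_m)=\QQ$, so $\omega_m=\pm q^{m/2}$ with $\omega_m^2=q^m$ and $\frob^m$ acts on $\jac[\ell]$ as the scalar $\omega_m$. Since $\omega_m\bmod\ell$ must be a root of $(X-1)(X-q^m)$ while $\omega_m^2\equiv q^m$, a two-line case check (using $\ell\nmid q$) yields $q^m\equiv 1$, i.e.\ $\ell\mid q^m-1$; then $(X-1)(X-q^m)\equiv(X-1)^2$ has the single root $1$, so $\omega_m\equiv 1\pmod\ell$ and $\frob^m$ is the identity on $\jac[\ell]$, giving $\jac[\ell]\subseteq\jac(\FF_{q^m})$.

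For part~(2) the hypothesis that $\ell$ is unramified in $K$ is used to guarantee that $\frob^m$ acts semisimply on $\jac[\ell]$, so that the geometric and algebraic multiplicities of each eigenvalue agree; this is the point at which the non-maximality of $\ZZ[\frob^m]$ at $\ell$, which is exactly what $\ell\mid 4\tau$ encodes, must be controlled. Granting both semisimplicity and $\ell\nmid q^m-1$, the eigenvalue $1$ contributes a two-dimensional fixed space, so $\jac(\FF_{q^m})[\ell]\simeq(\ZZ/\ell\ZZ)^2$; and since the eigenvalues of $(\frob^m)^k$ are $1$ and $q^{mk}$, one gets $\jac[\ell]\subseteq\jac(\FF_{q^{mk}})\iff(\frob^m)^k=\mathrm{id}\iff q^{mk}\equiv 1\iff\ell\mid q^{mk}-1$.

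The hard part will be proving $\ell\nmid q^m-1$ in part~(2), equivalently---since the two cases are exhaustive---that $q^m\equiv 1$ forces $\omega_m\in\ZZ$. Here I would first use $\ell\nmid q-1$: choosing a $q$-Weil number $\omega_1$ and a prime $\mathfrak l\mid\ell$ with $\omega_1\equiv 1\pmod{\mathfrak l}$ (possible as $\ell\mid\#\jac(\FF_q)$), the relation $\omega_1\bar\omega_1=q$ gives $\omega_1\equiv q\not\equiv 1\pmod{\bar{\mathfrak l}}$, so $\mathfrak l\neq\bar{\mathfrak l}$ and $\ell$ splits in $K/K^+$. If $q^m\equiv 1$ then $\omega_m=\omega_1^m$ is $\equiv 1$ at every prime above $\ell$, so $\omega_m\equiv 1\pmod{\ell\heltal{K}}$; the delicate, and I expect decisive, step is to upgrade this congruence to the global conclusion $\omega_m\in\ZZ$, using the Weil-number constraint $|\omega_m|=q^{m/2}$ on all conjugates together with $\ell\nmid q-1$ and the unramifiedness to force the two conjugate real traces $-\sigma\pm\sqrt\tau$ to coincide rationally. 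I expect this collapsing argument, together with the semisimplicity input above, to be the main obstacle, and it is where the genus-two (rather than merely abelian-surface) hypothesis and the precise notion of $q^m$-Weil number should enter.
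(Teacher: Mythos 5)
Your reduction of $P_m$ modulo $\ell$ and your treatment of part~(1) are essentially the paper's argument (the paper rules out $\omega_m\equiv-1$ by noting $\frob_m$ must fix the nonzero group $\jac(\FF_{q^m})[\ell]$; your ``single root'' observation does the same job). But part~(2) contains two genuine gaps, both of which you flag yourself and neither of which your sketch closes. First, you assert that unramifiedness of $\ell$ in $K$ ``guarantees'' that $\frob_m$ acts semisimply on $\jac[\ell]$. This does not follow from general principles: $\frob_m$ acts through the order $\ZZ[\omega_m]\cap\End(\jac)$, which need not be maximal at $\ell$, and semisimplicity of $\frob_m$ on $V_\ell\jac$ says nothing about $T_\ell\jac/\ell$. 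The paper proves semisimplicity by a specific device you are missing: let $k$ be the order of $q^m$ mod $\ell$, write $\frob_m$ in Jordan-like form $M$ with possible off-diagonal entries $\alpha,\beta$, and observe that $\omega_m^k$ is a $q^{mk}$-Weil number whose minimal polynomial would reduce to $(X-1)^2$ mod $\ell$ if it had degree two --- contradicting unramifiedness by Dedekind's factorization theorem. Hence $\omega_m^k\in\ZZ$, $\omega_m^k\equiv1\pmod\ell$, so $\frob_m^k=\mathrm{id}$ on $\jac[\ell]$, forcing $M^k=I$ and therefore $\alpha\equiv\beta\equiv0$.

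Second, your route to $\ell\nmid q^m-1$ (splitting of $\ell$ in $K/K^+$ and a ``collapsing'' of conjugate traces to force $\omega_m\in\ZZ$) is left as an admitted obstacle and is not the mechanism that works. The paper's argument is short and again runs through the minimal polynomial $f$ of $\omega_m$: degrees $3$ and $4$ are excluded because then $P_m\equiv(X-1)^2(X-q^m)^2$ forces $\ell$ to ramify; with $\deg f=2$ one gets $f\equiv(X-1)(X-q^m)\pmod\ell$ (the alternatives are squares mod $\ell$, again forcing ramification), and if $q^m\equiv1\pmod\ell$ this too becomes $(X-1)^2$, a contradiction. In short, the single tool you need --- ``an irreducible factor of $P_m$ whose reduction mod $\ell$ is a square contradicts unramifiedness'' --- is absent from your proposal, and it is exactly this tool that closes both of your gaps.
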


By Theorem~\ref{theorem:main} and \ref{theorem:main:supplement} we get the following corollary.

\setcounter{section}{5}\setcounter{theorem}{9}

\begin{corollary}\nummerkontrol{\ref{corollary:WeilNonDegenerate}}
Consider a genus two curve~$C$ defined over a finite field~$\FF_q$. Let $\ell$ be an odd prime number dividing the
number of $\FF_q$-rational points on the Jacobian~$\jac$, and with $\ell\nmid q$. Let $q$ be of multiplicative order
$k$ modulo $\ell$. If $\ell\nmid q-1$, then the Weil-pairing is non-degenerate on
$\jac(\FF_{q^k})[\ell]\times\jac(\FF_{q^k})[\ell]$.
\end{corollary}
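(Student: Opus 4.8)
The plan is to deduce the corollary from the structural Theorems~\ref{theorem:main} and~\ref{theorem:main:supplement} by combining them with the compatibility of the Weil-pairing with Frobenius. Write $W=\jac(\FF_{q^k})[\ell]=\ker(\frob^k-1)$, viewed as a $\ZZ/\ell\ZZ$-subspace of the four-dimensional space $\jac[\ell]$, on which the Weil-pairing $e_\ell$ is alternating and non-degenerate by \cite{sil}. Since $k$ is the multiplicative order of $q$ modulo $\ell$, we have $\ell\mid q^k-1$, so $\mu_\ell\subseteq\FF_{q^k}$ and every value of $e_\ell$ is fixed by $\frob^k$; this is the engine of the argument. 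I would first record the identity $e_\ell(\frob u,\frob v)=e_\ell(u,v)^q$, whence $e_\ell(\frob^k u,\frob^k v)=e_\ell(u,v)^{q^k}=e_\ell(u,v)$ because $q^k\equiv1\pmod{\ell}$.

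From this the orthogonal complement $W^{\perp}$ can be identified explicitly. For $w\in W$ one has $\frob^k w=w$, so $e_\ell(w,\frob^k v)=e_\ell(w,v)$ for every $v$, and therefore $e_\ell\bigl(w,(\frob^k-1)v\bigr)=1$. Thus $(\frob^k-1)\jac[\ell]\subseteq W^{\perp}$. Comparing dimensions, non-degeneracy of $e_\ell$ on all of $\jac[\ell]$ gives $\dim W^{\perp}=4-\dim W$, while rank--nullity gives $\dim(\frob^k-1)\jac[\ell]=4-\dim W$; hence $W^{\perp}=(\frob^k-1)\jac[\ell]$. Consequently $e_\ell$ is non-degenerate on $W\times W$ if and only if $\ker(\frob^k-1)\cap(\frob^k-1)\jac[\ell]=\{0\}$, that is, precisely when $\frob^k$ acts semisimply at the eigenvalue $1$.

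It then remains to establish this semisimplicity, and here the two theorems enter, applied with $m=k$. If $\ell\nmid 4\tau$, Theorem~\ref{theorem:main} shows $W$ is bicyclic, so $\dim W=2$; since $\ell\mid\#\jac(\FF_q)$ the endomorphism $\frob$ has the eigenvalue $1$ on $\jac[\ell]$, and the roots of the characteristic polynomial occur in Weil-pairing-dual pairs $\{\alpha,q/\alpha\}$, so the partner of the root $1$ is $q$. As $\ell\nmid q-1$ we have $q\not\equiv1$, so $1$ and $q$ are distinct roots, each of multiplicity one (a higher multiplicity of $1$ would force $\dim W=4$), and both satisfy $\lambda^k=1$. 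Hence $W=\ker(\frob-1)\oplus\ker(\frob-q)$ is a sum of two distinct genuine eigenspaces, which both yields the semisimplicity and exhibits $W$ as a dual pair on which $e_\ell$ is perfect. If instead $\ell\mid 4\tau$, then because $\ell\mid q^k-1$ the case $\omega_k\notin\ZZ$ of Theorem~\ref{theorem:main:supplement} is excluded (it would force $\ell\nmid q^k-1$), so $\omega_k\in\ZZ$, $\jac[\ell]\subseteq\jac(\FF_{q^k})$, and $\frob^k=1$ on $\jac[\ell]$; here $W=\jac[\ell]$ and non-degeneracy is immediate from \cite{sil}.

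The step I expect to be the main obstacle is the bicyclic case: one must rule out that the alternating form $e_\ell$ collapses on the two-dimensional $W$, which it would if $1$ occurred there with multiplicity two, since then $e_\ell(u,v)^{q-1}=1$ with $e_\ell(u,v)\in\mu_\ell$ would force $e_\ell\equiv1$ on $W$. The hypothesis $\ell\nmid q-1$ is exactly what separates the dual eigenvalues $1$ and $q$ modulo $\ell$ and so prevents this collapse. The remaining delicacy is to be sure no further coincidence of eigenvalues modulo $\ell$ intrudes; this is controlled by the arithmetic conditions underlying the theorems ($\ell\nmid 4\tau$, respectively $\ell$ unramified in $K$), and it is there that the real content has already been invested.
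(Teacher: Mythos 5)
Your strategy is genuinely different from the paper's: you derive non-degeneracy on $W=\jac(\FF_{q^k})[\ell]$ from Galois-equivariance of the Weil-pairing, identify $W^{\perp}=(\frob^k-1)\jac[\ell]$ by a dimension count, and reduce everything to semisimplicity of $\frob^k$ at the eigenvalue $1$. The paper instead invokes the Rubin--Silverberg result that $e_W$ is non-degenerate on $U\times V$ with $U=\jac(\FF_q)[\ell]$ and $V=\ker(\frob-q)\cap\jac[\ell]$, uses Theorem~\ref{theorem:main} to write $\jac(\FF_{q^k})[\ell]=U\oplus V$, and checks non-degeneracy by pairing an arbitrary $x=x_U+x_V$ against an explicit $y+z$. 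Your reduction is correct as far as it goes and would be more self-contained; the branch $\ell\mid 4\tau$ is handled identically in both proofs (and both silently use Theorem~\ref{theorem:main:supplement}, whose unramifiedness hypothesis the corollary does not actually assume).

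There is, however, a genuine gap at exactly the point you identify as the main obstacle. Your justification that $1$ and $q$ occur with multiplicity one in $\bar{P}_1$ is the parenthetical ``a higher multiplicity of $1$ would force $\dim W=4$'', and that implication is false. If $\bar{P}_1(X)\equiv(X-1)^2(X-q)^2\pmod{\ell}$ with a non-trivial Jordan block at each of $1$ and $q$, then, since $\ell\nmid k$ (as $k\mid\ell-1$), $\frob^k$ is again a sum of two non-trivial Jordan blocks with eigenvalue $1$; hence $\dim W=\dim\ker(\frob^k-1)=2$, $W$ is still bicyclic, nothing in Theorem~\ref{theorem:main} is violated by the dimension alone, and yet $\ker(\frob^k-1)=\operatorname{im}(\frob^k-1)=W$, so the pairing is \emph{totally} degenerate on $W$. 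This is precisely the configuration the corollary must exclude, and your argument does not exclude it. The repair, inside your own framework, is to observe that in this configuration $\bar{P}_k(X)\equiv(X-1)^4\pmod{\ell}$, and comparison of coefficients (as in the proof of Theorem~\ref{theorem:main}) forces $\ell\mid 4\tau$, so the bad case lands in your other branch, where $\omega_k\in\ZZ$ makes $\frob^k$ the identity and no Jordan block survives. With that substitution the proof closes; the rest of your multiplicity analysis is sound, since once $1$ and $q$ are simple roots the remaining pair $\alpha,q/\alpha$ cannot satisfy $\alpha^k=1$ without driving $\dim W$ to at least $3$, contradicting Theorem~\ref{theorem:main}.
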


For the $2$-torsion part, we prove the following theorem.

\setcounter{section}{5}\setcounter{theorem}{10}

\begin{theorem}\nummerkontrol{\ref{theorem:even}}
Consider a genus two curve~$C$ defined over a finite field~$\FF_q$ of odd characteristic. Let
    $$P_m(X)=X^4+sX^3+tX^2+sq^mX+q^{2m}$$
be the characteristic polynomial of the $q^m$-power Frobenius endomorphism of the Jacobian~$\jac$. Assume
$|\jac(\FF_{q^m})|$ is even. Then
    $$
    \jac[2]\subseteq
            \begin{cases}
            \jac(\FF_{q^{4m}}), & \text{if $s$ is even;} \\
            \jac(\FF_{q^{6m}}), & \text{if $s$ is odd.}
            \end{cases}
    $$
\end{theorem}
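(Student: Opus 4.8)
The plan is to translate the statement into a question about the order of Frobenius acting on the $2$-torsion. First I would observe that $\jac[2]$ is a four-dimensional $\FF_2$-vector space on which the $q^m$-power Frobenius endomorphism $\frob$ acts $\FF_2$-linearly, and that a point of $\jac[2]$ is $\FF_{q^{nm}}$-rational exactly when it is fixed by $\frob^n$, the $q^{nm}$-power Frobenius. Hence $\jac[2]\subseteq\jac(\FF_{q^{nm}})$ if and only if $\frob^n$ acts as the identity on $\jac[2]$, and the whole problem reduces to bounding the order of $\frob$ in $\mathrm{GL}(\jac[2])\cong\mathrm{GL}_4(\FF_2)$.

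Next I would determine the characteristic polynomial of $\frob$ on $\jac[2]$, which is the reduction $\overline{P}_m:=P_m\bmod 2$. Since the characteristic is odd, $q^m\equiv q^{2m}\equiv 1\pmod 2$, so
    $$\overline{P}_m(X)=X^4+\bar s X^3+\bar t X^2+\bar s X+1\in\FF_2[X],$$
where $\bar s,\bar t$ are the reductions of $s,t$. The hypothesis that $|\jac(\FF_{q^m})|$ is even enters precisely here: since $|\jac(\FF_{q^m})|=P_m(1)$ and $P_m(1)\equiv\bar t\pmod 2$, evenness forces $\bar t=0$. Therefore $\overline{P}_m(X)=(X+1)^4$ when $s$ is even, and $\overline{P}_m(X)=X^4+X^3+X+1=(X+1)^2(X^2+X+1)$ when $s$ is odd.

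It then remains to compute the order of $\frob$ in each case, and the useful tool is the freshman's dream in characteristic two. When $s$ is even, $\overline{P}_m(X)=(X+1)^4$ shows $\frob$ is unipotent; writing $\frob=1+N$ with $N=\frob+1$, Cayley--Hamilton gives $N^4=0$, and repeated squaring yields $\frob^2=1+N^2$ and $\frob^4=1+N^4=1$, so $\frob^4=\mathrm{id}$ and $\jac[2]\subseteq\jac(\FF_{q^{4m}})$. When $s$ is odd, the factors $(X+1)^2$ and $X^2+X+1$ are coprime, so $\jac[2]$ splits as a direct sum of two $\frob$-invariant planes; on the first, $(\frob+1)^2=0$ gives $\frob^2=\mathrm{id}$ by the same squaring, while on the second, $\frob^2+\frob+1=0$ forces $\frob^3=\mathrm{id}$. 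Taking the least common multiple gives $\frob^6=\mathrm{id}$, hence $\jac[2]\subseteq\jac(\FF_{q^{6m}})$.

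The computations are short once the setup is in place; the only point needing care is that the characteristic polynomial alone—rather than the finer Jordan or rational canonical form—already pins down the order, which is exactly what the characteristic-two identity $(1+N)^{2^r}=1+N^{2^r}$ delivers. I would also verify explicitly the two standard facts on which everything hinges, namely that $\overline{P}_m$ is the characteristic polynomial of $\frob$ on $\jac[2]$ and that $|\jac(\FF_{q^m})|=P_m(1)$, since the argument uses both in an essential way.
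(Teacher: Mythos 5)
Your proof is correct and follows essentially the same route as the paper's: reduce $P_m$ modulo $2$, use $P_m(1)\equiv t\pmod 2$ to force $t$ even, and factor the resulting polynomial as $(X+1)^4$ or $(X+1)^2(X^2+X+1)$. If anything, your final step is more careful than the paper's, which in the odd-$s$ case merely appeals to the complex roots of $X^2+X+1$ being cube roots of unity, whereas your use of $(1+N)^{2^r}=1+N^{2^r}$ and the coprime splitting cleanly handles the non-semisimple part.
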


Now consider a supersingular genus two curve~$C$ defined over $\FF_q$; cf.~section~\ref{sec:supersingularCurves}.
Again, let $\ell$ be a prime number dividing the number of $\FF_q$-rational points on the Jacobian and let~$k$ be the
multiplicative order of $q$ modulo $\ell$. We know that $k\leq 12$, cf. Galbraith~\cite{galbraith01} and Rubin and
Silverberg~\cite{rubin-silverberg02}. If $\ell^2\nmid |\jac(\FF_q)|$, then in many cases
$\jac[\ell]\subseteq\jac(\FF_{q^k})$, cf. Stichtenoth~\cite{stichtenoth}. Zhu~\cite{zhu} gives a complete description
of the subgroup of $\FF_q$-rational points on the Jacobian. Using Theorem~\ref{theorem:main} we get the following
explicit description of the $\ell$-torsion subgroup of the Jacobian of a supersingular genus two curve.

\setcounter{section}{6}\setcounter{theorem}{13}

\begin{theorem}\nummerkontrol{\ref{theorem:TEDss}}
Consider a supersingular genus two curve~$C$ defined over~$\FF_q$. Let~$\ell$ be a prime number dividing the number of
$\FF_q$-rational points on the Jacobian~$\jac$, and with $\ell\nmid q$. Depending on the cases in
table~\ref{table:conditions} we get the following properties of~$\jac$.
    \begin{description}
    \item[Case~\textsc{i}] $-q^2\equiv q^4\equiv 1\pmod{\ell}$ and~$\jac[\ell]\subseteq\jac(\FF_{q^4})$. If $\ell\neq 2$, then~$\jac(\FF_q)[\ell]$ is cyclic.
    \item[Case~\textsc{ii}] $q^3\equiv 1\pmod{\ell}$, $\jac[\ell]\subseteq\jac(\FF_{q^6})$ and~$\jac(\FF_q)$ is cyclic.
    If $\ell\neq 3$, then $q\not\equiv 1\pmod{\ell}$.
    \item[Case~\textsc{iii}] $-q^3\equiv q^6\equiv 1\pmod{\ell}$ and~$\jac[\ell]\subseteq\jac(\FF_{q^6})$. If $\ell\neq 3$, then~$\jac(\FF_q)[\ell]$ is cyclic.
    \item[Case~\textsc{iv}] $q\not\equiv q^5\equiv 1\pmod{\ell}$, $\jac[\ell]\subseteq\jac(\FF_{q^{10}})$ and~$\jac(\FF_q)$ is cyclic.
    \item[Case~\textsc{v}] $q\not\equiv q^5\equiv 1\pmod{\ell}$, $\jac[\ell]\subseteq\jac(\FF_{q^{10}})$ and~$\jac(\FF_q)$ is cyclic.
    \item[Case~\textsc{vi}] $-q^6\equiv q^{12}\equiv 1\pmod{\ell}$, $\jac[\ell]\subseteq\jac(\FF_{q^{24}})$ and~$\jac(\FF_q)$ is cyclic.
    \item[Case~\textsc{vii}] $q\equiv 1\pmod{\ell}$ and~$\jac[\ell]\subseteq\jac(\FF_{q^2})$. If $\ell\neq 2$, then~$\jac(\FF_q)[\ell]$ is bicyclic.
    \item[Case~\textsc{viii}] $-q\equiv q^2\equiv 1\pmod{\ell}$ and~$\jac[\ell]\subseteq\jac(\FF_{q^2})$. If $\ell\neq 2$, then~$\jac(\FF_q)[\ell]$ is bicyclic.
    \item[Case~\textsc{ix}] If $\ell\neq 3$, then $q\not\equiv q^3\equiv 1\pmod{\ell}$, $\jac[\ell]\subseteq\jac(\FF_{q^3})$ and~$\jac(\FF_q)[\ell]$ is bicyclic.
    \end{description}
\end{theorem}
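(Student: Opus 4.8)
The plan is to treat the nine cases of table~\ref{table:conditions} uniformly, reducing each to the structural results Theorem~\ref{theorem:main} and Theorem~\ref{theorem:main:supplement} and, where these do not apply, to a direct analysis of the Frobenius eigenvalues. The input that makes the supersingular situation tractable is that every Weil number of~$\jac$ is of the shape $\omega=\sqrt q\,\zeta$ with $\zeta$ a root of unity, so that the characteristic polynomial $P(X)$ of $\frob$ belongs to the finite list recorded in the table. For each entry I would first read off $\sigma$ and $\tau$ (hence $4\tau$), and note the explicit eigenvalues $\alpha_1,\dots,\alpha_4$ together with their reductions modulo a prime $\lambda\mid\ell$ of $K=\QQ(\omega)$; the hypothesis that $\ell$ is unramified in~$K$ guarantees these reductions are well behaved.

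I would then produce the two congruences attached to each case from two independent sources. The first is the divisibility hypothesis itself: since $\ell\mid|\jac(\FF_q)|=P(1)=\prod_i(1-\alpha_i)$, factoring $P(1)$ with the explicit eigenvalues pins~$q$ to the stated residue class. For instance in Case~\textsc{i} one has $P(X)=X^4+q^2$, so $P(1)=1+q^2$ and $\ell\mid P(1)$ is exactly $-q^2\equiv 1\pmod\ell$. The second congruence records the field of definition of the full torsion: $\jac[\ell]\subseteq\jac(\FF_{q^n})$ holds precisely when $\frob^n$ fixes $\jac[\ell]$ pointwise, i.e.\ when $\alpha_i^n\equiv 1\pmod\lambda$ for all~$i$; computing the order of the $\alpha_i$ modulo~$\lambda$ from $\alpha_i=\sqrt q\,\zeta_i$ yields the smallest such~$n$ and hence the stated field. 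In Case~\textsc{i}, $\alpha^4=-q^2$, so $\frob^4\equiv 1$ on $\jac[\ell]$ is again $-q^2\equiv 1$, giving $\jac[\ell]\subseteq\jac(\FF_{q^4})$ and the consequence $q^4\equiv 1$.

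The rank statements then follow by splitting on~$4\tau$. When $\ell\nmid 4\tau$ --- the generic situation, which as the table shows covers Cases~\textsc{i}--\textsc{vi} --- Theorem~\ref{theorem:main} gives that $\jac(\FF_{q^m})[\ell]$ has rank at most two and is bicyclic iff $\ell\mid q^m-1$; taking $m=1$ and observing that $q\not\equiv 1\pmod\ell$ in each of these cases forces $\jac(\FF_q)[\ell]$ to be cyclic. When $\ell\mid 4\tau$ --- for example Case~\textsc{viii}, where $P(X)=(X^2+q)^2$ and $\tau=0$ --- Theorem~\ref{theorem:main:supplement} applies, and the dichotomy $\omega\in\ZZ$ versus $\omega\notin\ZZ$ delivers the bicyclic group $(\ZZ/\ell\ZZ)^2$ over~$\FF_q$ together with the field of full rationality $\FF_{q^k}$ or $\FF_{q^{mk}}$. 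The sharper assertion that the whole group $\jac(\FF_q)$, not merely its $\ell$-part, is cyclic in Cases~\textsc{ii}, \textsc{iv}, \textsc{v}, \textsc{vi} I would import from Zhu's description~\cite{zhu} of the rational subgroup; non-degeneracy of the Weil pairing, when needed, is available through Corollary~\ref{corollary:WeilNonDegenerate}.

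The main obstacle is not any single computation but the boundary situations that escape both main theorems, together with the uniform bookkeeping over all nine rows. The delicate point throughout is \emph{semisimplicity of $\frob$ on $\jac[\ell]$}: the $\alpha_i$ are distinct over~$\QQ$ but may collide modulo~$\ell$, and it is precisely the hypotheses $\ell\nmid 4\tau$, $\ell$ unramified, and $\ell\neq 2,3$ that rule out the bad collisions. The case $q\equiv 1\pmod\ell$ (Case~\textsc{vii}) lies outside Theorem~\ref{theorem:main}, which requires $\ell\nmid q-1$, and also outside Theorem~\ref{theorem:main:supplement}, since there $\ell\nmid 4\tau$; here I would argue directly, using that the minimal polynomial of $\frob$ is $X^2-q$, which is separable modulo~$\ell$ for $\ell\neq 2$, so that $\frob$ is semisimple on $\jac[\ell]$ with eigenvalues $\pm 1$ each of multiplicity two. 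This gives at once that $\jac(\FF_q)[\ell]=\ker(\frob-1)$ is bicyclic and that $\frob^2=q$, which modulo~$\ell$ is the identity on $\jac[\ell]$, so $\jac[\ell]\subseteq\jac(\FF_{q^2})$. Finally, the excluded small primes are dispatched separately: for $\ell=2$ the $2$-torsion field is already fixed by Theorem~\ref{theorem:even}, and for $\ell=3$ one inspects $P(X)\bmod 3$ in the finitely many relevant rows by hand.
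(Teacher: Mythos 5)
Your overall strategy --- working case by case through table~\ref{table:conditions}, extracting the congruence on $q$ from $\ell\mid f(1)$, the field of full rationality from the order of the Frobenius roots, and the rank statements from Theorem~\ref{theorem:main} and Theorem~\ref{theorem:main:supplement} --- is the same as the paper's, and your direct treatment of Case~\textsc{vii} (minimal polynomial $X^2-q$, separable modulo $\ell$ for $\ell\neq 2$, hence $\frob$ semisimple with eigenvalues $\pm 1$ each of multiplicity two) is arguably cleaner than the paper's, which goes through Tate's theorem to split $\jac$ up to isogeny as $E\times E$. However, two of your organizing claims would fail as stated.

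First, the assertion that ``as the table shows'' one has $\ell\nmid 4\tau$ in Cases~\textsc{i}--\textsc{vi} is not something the table shows; it is where a substantial part of the work lies. One computes $4\tau=8q,\,4q,\,12q,\,5q,\,5^{a},\,3\cdot 2^{a+1}$ in those cases, so the primes $\ell=3$ (Cases~\textsc{iii},~\textsc{vi}) and $\ell=5$ (Cases~\textsc{iv},~\textsc{v}) must be excluded by separate arguments: in Case~\textsc{iv}, $\ell=5$ would force $q\equiv 2\pmod 5$, impossible since $a$ is even and $2$ is a quadratic nonresidue modulo $5$; in Case~\textsc{v} supersingularity rules out $\ell=p=5$; in Case~\textsc{vi} one checks $3\nmid f(1)$. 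Second, Case~\textsc{ix} does not fit your dichotomy at all: there $\tau=2q+q-3q=0$, so one is permanently in the regime $\ell\mid 4\tau$, yet Theorem~\ref{theorem:main:supplement} cannot be invoked either, because its hypotheses (inherited from Theorem~\ref{theorem:main}) include $\ell\nmid q-1$, and ``$q\not\equiv 1\pmod\ell$'' is precisely one of the conclusions to be proved in that case. The paper argues directly: $f=(X^2\mp\sqrt{q}X+q)^2$ with irreducible quadratic factor, so $\jac\sim E\times E$ by Tate and $\jac(\FF_q)[\ell]$ has even rank; rank four would give $\sqrt{q}\equiv\pm 1$ and $f(1)\equiv 9$ or $1\pmod\ell$, forcing $\ell=3$. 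Two smaller points: ``$\ell$ unramified in $K$'' is a hypothesis of Theorem~\ref{theorem:main:supplement} that must be verified where you use it (in Case~\textsc{viii}, $K=\QQ(i)$ because $a$ is even, so $\ell\neq 2$ suffices), not assumed; and ``$\alpha_i^n\equiv 1\pmod\lambda$ for all $i$'' is only necessary, not sufficient, for $\jac[\ell]\subseteq\jac(\FF_{q^n})$. The clean way around the semisimplicity issue you flag is that $P(\frob)=0$ lets one express $\frob^n$ as an integer ($\frob^4=-q^2$ in Case~\textsc{i}, $\frob^6=q^3$ in Case~\textsc{ii}, and so on), which is $\equiv 1\pmod\ell$ and therefore literally the identity on $\jac[\ell]$.
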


In particular, it follows from Theorem~\ref{theorem:TEDss} that if $\ell>3$, then the $\ell$-torsion points on the
Jacobian~$\jac$ of a supersingular genus two curve defined over~$\FF_q$ are rational over a field extension of~$\FF_q$
of degree at most~$24$, and~$\jac(\FF_q)[\ell]$ is of rank at most two as a $\ZZ/\ell\ZZ$-module.

\setcounter{section}{1} \setcounter{theorem}{0}

\subsection*{Assumption}

In this paper, a \emph{curve} is an irreducible nonsingular projective variety of dimension one.

\section{Genus two curves}\label{sec:HyperellipticCurves}

A hyperelliptic curve is a projective curve $C\subseteq\PP^n$ of genus at least two with a separable, degree two
morphism $\phi:C\to\PP^1$. It is well known, that any genus two curve is hyperelliptic. Throughout this paper, let~$C$
be a curve of genus two defined over a finite field~$\FF_q$ of characteristic~$p$. By the Riemann-Roch Theorem there
exists a birational map \mbox{$\psi:C\to\PP^2$}, mapping~$C$ to a curve given by an equation of the form
    $$y^2+g(x)y=h(x),$$
where $g,h\in\FF_q[x]$ are of degree $\deg(g)\leq 3$ and $\deg(h)\leq 6$; cf.~\cite[chapter~1]{cassels}.

The set of principal divisors $\mathcal{P}(C)$ on~$C$ constitutes a subgroup of the degree zero divisors $\Div_0(C)$.
The Jacobian~$\jac$ of~$C$ is defined as the quotient
    $$\jac=\Div_0(C)/\mathcal{P}(C).$$
{\samepage Let $\ell\neq p$ be a prime number. The $\ell^n$-torsion subgroup~$\jac[\ell^n]\subseteq\jac$ of points of
order dividing $\ell^n$ is a $\ZZ/\ell^n\ZZ$-module of rank four, i.e.
    \begin{equation*}\label{eq:J-struktur}
    \jac[\ell^n]\simeq\ZZ/\ell^n\ZZ\times\ZZ/\ell^n\ZZ\times\ZZ/\ell^n\ZZ\times\ZZ/\ell^n\ZZ;
    \end{equation*}
cf.~\cite[Theorem~6, p.~109]{lang59}.}

The multiplicative order $k$ of $q$ modulo $\ell$ plays an important role in cryptography, since the (reduced)
Tate-pairing is non-degenerate over $\FF_{q^k}$; cf.~\cite{hess}.

\begin{definition}[Embedding degree]
Consider a prime number $\ell\neq p$ dividing the number of $\FF_q$-rational points on the Jacobian~$\jac$. The
embedding degree of~$\jac(\FF_q)$ with respect to $\ell$ is the least number $k$, such that $q^k\equiv 1\pmod{\ell}$.
\end{definition}

Closely related to the embedding degree, we have the \emph{full} embedding degree.

\begin{definition}[Full embedding degree]
Consider a prime number $\ell\neq p$ dividing the number of $\FF_q$-rational points on the Jacobian~$\jac$. The full
embedding degree of~$\jac(\FF_q)$ with respect to $\ell$ is the least number $\fullED$, such that
$\jac[\ell]\subseteq\jac(\FF_{q^\fullED})$.
\end{definition}

\begin{remark}
If~$\jac[\ell]\subseteq\jac(\FF_{q^\fullED})$, then $\ell\mid q^\fullED-1$; cf.~\cite[Corollary~5.77, p.~111]{hhec}.
Hence, the full embedding degree is a multiple of the embedding degree.
\end{remark}

A priori, the Weil-pairing is only non-degenerate over $\FF_{q^\fullED}$. But in fact, as we shall see, the
Weil-pairing is also non-degenerate over $\FF_{q^k}$.

\section{The Weil- and the Tate-pairing}

Let $\FF$ be an algebraic extension of~$\FF_q$. Let $x\in\jac(\FF)[\ell]$ and $y=\sum_ia_i P_i\in\jac(\FF)$ be divisors
with disjoint supports, and let $\bar{y}\in\jac(\FF)/\ell\jac(\FF)$ denote the divisor class containing the
divisor~$y$. Furthermore, let $f_x\in\FF(C)$ be a rational function on~$C$ with divisor $\divisor(f_x)=\ell x$. Set
$f_x(y)=\prod_if(P_i)^{a_i}$. Then $e_\ell(x,\bar{y})=f_x(y)$ is a well-defined pairing
    $$e_\ell:\jac(\FF)[\ell]\times\jac(\FF)/\ell\jac(\FF)\longrightarrow\FF^\times/(\FF^\times)^\ell,$$
it is called the \emph{Tate-pairing}; cf.~\cite{galbraith05}. Raising the result to the
power~$\frac{|\FF^\times|}{\ell}$ gives a well-defined element in the subgroup $\mu_\ell\subseteq\bar\FF$ of the
$\ell^{\mathrm{th}}$ roots of unity. This pairing
    $$\hat{e}_\ell:\jac(\FF)[\ell]\times\jac(\FF)/\ell\jac(\FF)\longrightarrow\mu_\ell$$
is called the \emph{reduced} Tate-pairing. If the field~$\FF$ is finite and contains the $\ell^\mathrm{th}$ roots of
unity, then the Tate-pairing is bilinear and non-degenerate; cf.~\cite{hess}.

Now let $x,y\in\jac[\ell]$ be divisors with disjoint support. The Weil-pairing
    $$e_\ell:\jac[\ell]\times\jac[\ell]\to\mu_\ell$$
is then defined by $e_\ell(x,y)=\frac{\hat{e}_\ell(x,\bar{y})}{\hat{e}_\ell(y,\bar{x})}$. The Weil-pairing is bilinear,
anti-symmetric and non-degenerate on~$\jac[\ell]\times\jac[\ell]$; cf.~\cite{miller}.

\section{Matrix representation of the endomorphism ring}

An endomorphism $\psi:\jac\to\jac$ induces a linear map $\bar{\psi}:\jac[\ell]\to\jac[\ell]$ by restriction. Hence,
$\psi$ is represented by a matrix $M\in\Mat_4(\ZZ/\ell\ZZ)$ on~$\jac[\ell]$. Let $f\in\ZZ[X]$ be the characteristic
polynomial of $\psi$ (see~\cite[pp.~109--110]{lang59}), and let $\bar{f}\in(\ZZ/\ell\ZZ)[X]$ be the characteristic
polynomial of $\bar{\psi}$. Then $f$ is a monic polynomial of degree four, and by \cite[Theorem~3, p.~186]{lang59},
    \begin{equation*}\label{eq:KarPolKongruens}
    f(X)\equiv\bar{f}(X)\pmod{\ell}.
    \end{equation*}

Since~$C$ is defined over~$\FF_q$, the mapping $(x,y)\mapsto (x^q,y^q)$ is a morphism on~$C$. This morphism induces the
$q$-power Frobenius endo\-morphism $\frob$ on the Jacobian~$\jac$. Let $P(X)$ be the characteristic polynomial
of~$\frob$. $P(X)$ is called the \emph{Weil polynomial} of~$\jac$, and
    $$|\jac(\FF_q)|=P(1)$$
by the definition of $P(X)$ (see~\cite[pp.~109--110]{lang59}); i.e. the number of~$\FF_q$-rational points on the
Jacobian is $P(1)$.

\begin{definition}[Weil number]\label{definition:WeilNumber}
Let notation be as above. Let $P_m(X)$ be the characteristic polynomial of the $q^m$-power Frobenius
endomorphism~$\frob_m$ on~$\jac$. Consider a number $\omega_m\in\CC$ with $P_m(\omega_m)=0$. If $P_m(X)$ is reducible,
assume furthermore that $\omega_m$ and $\frob_m$ are roots of the same irreducible factor of $P_m(X)$. We
identify~$\frob_m$ with~$\omega_m$, and we call~$\omega_m$ a \emph{$q^m$-Weil~number} of~$\jac$.
\end{definition}

\begin{remark}\label{rem:P_reducible}
A $q^m$-Weil~number is not necessarily uniquely determined. In general, $P_m(X)$ is irreducible, in which case~$\jac$
has four $q^m$-Weil~numbers.

Assume $P_m(X)$ is reducible. Write $P_m(X)=f(X)g(X)$, where $f,g\in\ZZ[X]$ are of degree at least one. Since
$P_m(\frob_m)=0$, either $f(\frob_m)=0$ or $g(\frob_m)=0$; if not, then either $f(\frob_m)$ or $g(\frob_m)$ has
infinite kernel, i.e. is not an endomorphism of~$\jac$. So a $q^m$-Weil number is well-defined.
\end{remark}

\section{Non-cyclic torsion}\label{sec:properties}

Consider a genus two curve~$C$ defined over a finite field~$\FF_q$. Let $P_m(X)$ be the characteristic polynomial of
the $q^m$-power Frobenius endomorphism~$\frob_m$ of the Jacobian~$\jac$. $P_m(X)$ is of the form
$P_m(X)=X^4+sX^3+tX^2+sq^mX+q^{2m}$, where $s,t\in\ZZ$. Let $\sigma=\frac{s}{2}$ and $\tau=2q^m+\sigma^2-t$. Then
    $$P_m(X)=X^4+2\sigma X^3+(2q^m+\sigma^2-\tau)X^2+2\sigma q^mX+q^{2m},$$
and $2\sigma,4\tau\in\ZZ$.

\begin{theorem}\label{theorem:main}
Consider a genus two curve~$C$ defined over a finite field~$\FF_q$. Write the characteristic polynomial of the
$q^m$-power Frobenius endomorphism of the Jacobian~$\jac$ as
    $$P_m(X)=X^4+2\sigma X^3+(2q^m+\sigma^2-\tau)X^2+2\sigma q^mX+q^{2m},$$
where $2\sigma,4\tau\in\ZZ$. Let $\ell$ be an odd prime number dividing the number of $\FF_q$-rational points on
$\jac$, and with $\ell\nmid q$ and $\ell\nmid q-1$. If $\ell\nmid 4\tau$, then
    \begin{enumerate}
    \item~$\jac(\FF_{q^m})[\ell]$ is of rank at most two as a $\ZZ/\ell\ZZ$-module, and
    \item~$\jac(\FF_{q^m})[\ell]$ is bicyclic if and only if $\ell$ divides $q^m-1$.
    \end{enumerate}
\end{theorem}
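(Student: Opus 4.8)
The plan is to translate the statement into the linear algebra of the Frobenius acting on $\jac[\ell]\simeq(\ZZ/\ell\ZZ)^4$. A point of $\jac[\ell]$ is $\FF_{q^m}$-rational precisely when it is fixed by the $q^m$-power Frobenius $\frob_m=\frob^m$, so $\jac(\FF_{q^m})[\ell]=\ker(\frob_m-1)$, and its rank as a $\ZZ/\ell\ZZ$-module equals the geometric multiplicity of the eigenvalue $1$ of the matrix representing $\frob_m$. Writing $\bar P_m$ for the reduction of $P_m$ modulo $\ell$, which is the characteristic polynomial of this matrix by the congruence recalled in Section~4, the inclusion $\jac(\FF_q)\subseteq\jac(\FF_{q^m})$ gives $\ell\mid|\jac(\FF_{q^m})|=P_m(1)$, so $1$ is indeed a root of $\bar P_m$. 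The whole theorem thus becomes a statement about the multiplicity and the diagonalizability of the eigenvalue $1$.

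First I would exploit the functional equation of $P_m$. Substituting $Y=X+q^m/X$ one checks that $P_m(X)/X^2=(Y+\sigma)^2-\tau$, so the roots fall into Weil pairs $\{\omega,q^m/\omega\}$ whose traces $Y=-\sigma\pm\sqrt{\tau}$ are the two roots of $(Y+\sigma)^2-\tau$. Since $\ell$ is odd and $\ell\nmid4\tau$, these two values are distinct modulo $\ell$, so $\bar P_m$ factors into two \emph{coprime} quadratics $X^2-Y_\pm X+q^m$. The eigenvalue $1$ can therefore lie in only one factor, the one with $Y\equiv1+q^m$, and there with multiplicity at most two; this already yields part~(1), since the geometric multiplicity is bounded by this algebraic multiplicity. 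Moreover that factor is $(X-1)(X-q^m)$, whose two roots are distinct exactly when $\ell\nmid q^m-1$. Hence if $\ell\nmid q^m-1$ the algebraic multiplicity of $1$ is one, the rank is one, and $\jac(\FF_{q^m})[\ell]$ is cyclic; this is the easy direction of part~(2).

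It remains to show that $\ell\mid q^m-1$ forces rank exactly two, and here the genuine obstacle appears: the algebraic multiplicity of $1$ is now two, but I must exclude the possibility that $\frob_m$ acts on the corresponding two-dimensional generalized eigenspace $V_1$ through a nontrivial unipotent (Jordan) block, which would make the rank drop to one. The idea is to descend from $\frob_m$ to $\frob$, using $\frob_m=\frob^m$. The space $V_1$ is $\frob$-invariant, and the eigenvalues $\alpha_1,\alpha_2$ of $\frob|_{V_1}$ satisfy $\alpha_1^m=\alpha_2^m=1$. If $\alpha_1\neq\alpha_2$, then $\frob|_{V_1}$ is diagonalizable and $\frob^m|_{V_1}$ is the identity, so there is no Jordan block; thus the only dangerous configuration is $\alpha_1=\alpha_2=\alpha$.

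To kill this configuration I would combine the two hypotheses. The same resolvent argument applied to $P_1$ shows (still using $\ell\nmid4\tau$, which forces the analogous discriminant for $P_1$ to be nonzero modulo~$\ell$) that the two quadratic factors of $\bar P_1$ are coprime, so the double factor $(X-\alpha)^2$ arising from $V_1$ must be one entire factor; comparing constant terms gives $\alpha^2=q$. Now $1$ is a root of $\bar P_1$ because $\ell\mid|\jac(\FF_q)|$, and it cannot be a root of $(X-\alpha)^2$, for that would give $\alpha=1$ and hence $q\equiv1\pmod{\ell}$, contradicting $\ell\nmid q-1$; so $1$ lies in the other factor, whose roots are then forced to be $1$ and $q$. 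Consequently the four roots of $\bar P_1$ are $\alpha,\alpha,1,q$, and since $q^m\equiv1$ each has $m$-th power $1$. Then $\bar P_m=(X-1)^4$, both traces $Y_\pm$ reduce to $2$, and $4\tau\equiv(Y_+-Y_-)^2\equiv0\pmod{\ell}$, contradicting $\ell\nmid4\tau$. Hence $\alpha_1=\alpha_2$ is impossible, $\frob^m$ acts trivially on $V_1$, the rank is two, and $\jac(\FF_{q^m})[\ell]$ is bicyclic. The crux of the whole argument is exactly this last step, where the separability condition $\ell\nmid4\tau$ and the condition $\ell\nmid q-1$ conspire to forbid the unipotent block.
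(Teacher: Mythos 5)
Your argument is correct, and at the decisive point it takes a genuinely different --- and more complete --- route than the paper. For part (1) and the forward half of (2) the two proofs have essentially the same content: the paper pins down $\bar P_m=(X-1)^4$ (rank three or four) or $\bar P_m=(X-1)^2(X-q^m)^2$ (bicyclic with $q^m\not\equiv 1$) via an explicit matrix shape and then compares coefficients to force $\ell\mid 4\tau$, while you read the same contradictions off the resolvent factorization $\bar P_m=(X^2-Y_+X+q^m)(X^2-Y_-X+q^m)$ with $Y_+\not\equiv Y_-\pmod{\ell}$; your bookkeeping is cleaner and avoids the external citation the paper needs for the rank-three case. The real divergence is the converse half of (2). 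The paper disposes of it in one line by asserting that non-degeneracy of the Tate pairing over $\FF_{q^m}$ forces rank at least two; but the Tate pairing is not alternating, and a non-degenerate pairing of a cyclic group of order $\ell$ against a cyclic quotient of order $\ell$ is perfectly possible (this is the familiar self-pairing situation for elliptic curves with $\ell\mid q-1$ and cyclic $\ell$-part), so that step needs exactly the kind of supplementary argument you supply. Your exclusion of the unipotent Jordan block --- restricting $\frob$ to the generalized $1$-eigenspace of $\frob_m$, showing that a repeated eigenvalue $\alpha$ would make $(X-\alpha)^2$ an entire quadratic factor of $\bar P_1$, hence $\alpha^2=q$, hence roots $\alpha,\alpha,1,q$ whose $m$-th powers are all $1$, hence $\ell\mid 4\tau$ --- is the correct genus-two analogue of the Balasubramanian--Koblitz diagonalizability argument and closes that gap. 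The one step you should write out rather than assert parenthetically is that $\ell\nmid 4\tau$ (defined from $P_m$) implies the resolvent discriminant of $P_1$ is also nonzero modulo $\ell$: if the two Weil-pair traces of $P_1$ coincided modulo a prime above $\ell$, the two pairs would coincide there as multisets, so their $m$-th power traces would coincide as well and $4\tau$ would vanish modulo $\ell$ --- true, but it deserves the two lines.
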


\begin{proof}
Let $\bar{P}_m\in(\ZZ/\ell\ZZ)[X]$ be the characteristic polynomial of the restriction of $\frob_m$ to~$\jac[\ell]$.
Since $\ell$ divides $|\jac(\FF_q)|$, $1$ is a root of $\bar{P}_m$. Assume that $1$ is a root of $\bar{P}_m$ of
multiplicity~$\nu$. Since the roots of $\bar{P}_m$ occur in pairs $(\alpha,q^m/\alpha)$, also $q^m$ is a root of
$\bar{P}_m$ of multiplicity~$\nu$.

If~$\jac(\FF_{q^m})[\ell]$ is of rank three as a $\ZZ/\ell\ZZ$-module, then $\ell$ divides $ q^m-1$ by
\cite[Proposition~5.78, p.~111]{hhec}. Choose a basis $\mathcal{B}$ of $\jac[\ell]$. With respect to $\mathcal{B}$,
$\frob_m$ is represented by a matrix of the form
    $$M=\begin{bmatrix}
        1 & 0 & 0 & m_1 \\
        0 & 1 & 0 & m_2 \\
        0 & 0 & 1 & m_3 \\
        0 & 0 & 0 & m_4
        \end{bmatrix}.
    $$
Now, $m_4=\det M\equiv\deg\frob_m=q^{2m}\equiv 1\pmod{\ell}$. Hence, $\bar{P}_m(X)=(X-1)^4$. By comparison of
coefficients it follows that $4\tau\equiv 0\pmod{\ell}$, and we have a contradiction. So~$\jac(\FF_{q^m})[\ell]$ is of
rank at most two as a $\ZZ/\ell\ZZ$-module.

Now assume that~$\jac(\FF_{q^m})[\ell]$ is bicyclic. If $q^m\not\equiv 1\pmod{\ell}$, then $1$ is a root of $\bar{P}_m$
of multiplicity two, i.e. $\bar{P}_m(X)=(X-1)^2(X-q^m)^2$. But then it follows by comparison of coefficients that
$4\tau\equiv 0\pmod{\ell}$, and we have a contradiction. So $q^m\equiv 1\pmod{\ell}$, i.e. $\ell$ divides $q^m-1$. On
the other hand, if $\ell$ divides $q^m-1$, then the Tate-pairing is non-degenerate on~$\jac(\FF_{q^m})[\ell]$, i.e.
$\jac(\FF_{q^m})[\ell]$ must be of rank at least two as a $\ZZ/\ell\ZZ$-module.  So~$\jac(\FF_{q^m})[\ell]$ is
bicyclic.
\end{proof}

If $\ell$ is a large prime number, then most likely $\ell\nmid 4\tau$, and Theorem~\ref{theorem:main} applies. In the
special case $\ell\mid 4\tau$ we get the following result.

\begin{theorem}\label{theorem:main:supplement}
Let notation be as in Theorem~\ref{theorem:main}. Furthermore, let $\omega_m$ be a $q^m$-Weil number of~$\jac$, and
assume that $\ell$ is unramified in $K=\QQ(\omega_m)$. Now assume that~\mbox{$\ell\mid 4\tau$}. Then the following
holds.
 \begin{enumerate}
 \item If $\omega_m\in\ZZ$, then $\ell\mid q^m-1$ and~$\jac[\ell]\subseteq\jac(\FF_{q^m})$.
 \item If $\omega_m\notin\ZZ$, then $\ell\nmid q^m-1$, $\jac(\FF_{q^m})[\ell]\simeq(\ZZ/\ell\ZZ)^2$ and
      ~$\jac[\ell]\subseteq\jac(\FF_{q^{mk}})$ if and only if $\ell\mid q^{mk}-1$.
 \end{enumerate}
\end{theorem}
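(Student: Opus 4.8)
The plan is to begin by pinning down the reduction $\bar P_m\in(\ZZ/\ell\ZZ)[X]$ of the Weil polynomial. Since $\ell$ divides $|\jac(\FF_q)|$, which divides $|\jac(\FF_{q^m})|=P_m(1)$, the element $1$ is a root of $\bar P_m$. The substitution $Y=X+q^m/X$ gives the identity $X^{-2}P_m(X)=(Y+\sigma)^2-\tau$; reducing modulo $\ell$, where $\tau\equiv 0$ because $\ell$ is odd and $\ell\mid 4\tau$, and clearing denominators yields
$$\bar P_m(X)=(X^2+\sigma X+q^m)^2\quad\text{in }(\ZZ/\ell\ZZ)[X].$$
Plugging in the root $1$ forces $1+\sigma+q^m\equiv 0$, hence $X^2+\sigma X+q^m\equiv(X-1)(X-q^m)$ and $\bar P_m(X)=(X-1)^2(X-q^m)^2$. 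This single identity drives everything that follows.

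For part~(1), suppose $\omega_m\in\ZZ$. Then the irreducible factor of $P_m$ having $\frob_m$ as a root is linear, so the identification of $\frob_m$ with $\omega_m$ from Remark~\ref{rem:P_reducible} shows $\frob_m=\omega_m$ as an endomorphism, and $P_m(X)=(X-\omega_m)^4$. Comparing constant terms together with $|\omega_m|=q^{m/2}$ gives $\omega_m^2=q^m$, while $\bar P_m(1)=0$ forces $\omega_m\equiv 1\pmod{\ell}$. Thus $\frob_m$ acts as the identity on $\jac[\ell]$, i.e. $\jac[\ell]\subseteq\jac(\FF_{q^m})$, and $q^m=\omega_m^2\equiv 1\pmod{\ell}$.

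For part~(2), suppose $\omega_m\notin\ZZ$ and let $g\in\ZZ[X]$ be the minimal polynomial of $\omega_m$, now of degree at least two; since $g\mid P_m$ we have $\bar g\mid\bar P_m$. The crux is to show that $\bar\frob_m$ acts semisimply on $\jac[\ell]$. I would deduce this from the hypothesis that $\ell$ is unramified in $K=\QQ(\omega_m)$: unramifiedness makes the reduction $\bar g$ squarefree, and since $\frob_m$ satisfies $g$ as an endomorphism, the minimal polynomial of $\bar\frob_m$ divides the squarefree $\bar g$, forcing $\bar\frob_m$ to be diagonalizable over $\overline{\FF}_\ell$. The one technical point I expect to do real work is that ``unramified'' must also exclude $\ell$ dividing the conductor $[\heltal{K}:\ZZ[\omega_m]]$, so that $\mathrm{disc}(g)$ is prime to $\ell$; this is where the main obstacle sits.

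Granting semisimplicity, the rest is eigenvalue bookkeeping. First, $\ell\nmid q^m-1$: if instead $q^m\equiv 1$, then $\bar P_m=(X-1)^4$, so every root of $\bar g\mid\bar P_m$ equals $1$ and $\bar g=(X-1)^{\deg g}$ with $\deg g\geq 2$, contradicting squarefreeness. Hence $q^m\not\equiv 1$, the eigenvalue $1$ of $\bar\frob_m$ has algebraic --- and, by semisimplicity, geometric --- multiplicity exactly two, so $\jac(\FF_{q^m})[\ell]=\ker(\bar\frob_m-1)\simeq(\ZZ/\ell\ZZ)^2$. Finally, $\jac[\ell]\subseteq\jac(\FF_{q^{mk}})$ if and only if $\frob_m^k$ acts trivially on $\jac[\ell]$; as $\bar\frob_m$ is diagonalizable with eigenvalues $1,1,q^m,q^m$, its $k$-th power is diagonalizable with eigenvalues $1,1,q^{mk},q^{mk}$, which is the identity precisely when $q^{mk}\equiv 1\pmod{\ell}$, i.e. $\ell\mid q^{mk}-1$.
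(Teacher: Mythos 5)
Your proposal reaches the same conclusions but, in part~(2), by a genuinely different route. For part~(1) the two arguments essentially coincide; your passage through $P_m(X)=(X-\omega_m)^4$ and the congruence $(1-\omega_m)^4\equiv 0\pmod{\ell}$ is a slightly cleaner way to force $\omega_m\equiv 1\pmod{\ell}$ than the paper's, which first derives $\omega_m=\pm q^{m/2}$ and then excludes $\omega_m\equiv -1$ by noting that $\frob_m$ would otherwise fail to fix $\jac(\FF_{q^m})[\ell]$. For part~(2) the paper does not argue via semisimplicity at all: it pins down $\deg f=2$ and $\bar f=(X-1)(X-q^m)$ by ramification arguments, decomposes $\jac[\ell]=U\oplus V$ with $U=\ker(\frob_m-1)^2\cap\jac[\ell]$ and $V=\ker(\frob_m-q^m)^2\cap\jac[\ell]$, writes $\frob_m$ as an explicit upper-triangular matrix $M$, computes $M^k$, and finally kills the off-diagonal entries by observing that $\omega_m^k$ is a $q^{mk}$-Weil number which must lie in $\ZZ$ and reduce to $1$ modulo $\ell$, since otherwise $\ell$ would ramify in $\QQ(\omega_m^k)=K$. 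Your observation that $g(\frob_m)=0$ forces the minimal polynomial of $\bar\frob_m$ to divide the squarefree $\bar g$, whence $\bar\frob_m$ is diagonalizable with eigenvalues $1,1,q^m,q^m$, collapses that entire computation into one step and is the more transparent argument. The technical point you flag --- that unramifiedness of $\ell$ in $K$ yields squarefreeness of $\bar g$ only when $\ell$ is also prime to the conductor $[\heltal{K}:\ZZ[\omega_m]]$ --- is a real issue, but it is not one you have introduced: the paper's own proof rests on exactly the same implication, both in its citation of the Dedekind--Kummer proposition (whose hypothesis includes coprimality to the conductor) and in its repeated assertions that a squared reduction of $f$ modulo $\ell$ forces $\ell$ to ramify. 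So your proof is on the same logical footing as the paper's; to close the point one would either have to add the conductor condition to the hypotheses of the theorem or show that, under the standing assumptions, $\ell$ dividing $\operatorname{disc}(g)$ already forces $\ell$ to divide $d_K$.
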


\begin{remark}
A prime number $\ell$ is unramified in~$K$ if and only if $\ell$ divides the discriminant of the field extension
$K/\QQ$; see e.g. \cite[Theorem~2.6, p.~199]{neukirch}. Hence, almost any prime number $\ell$ is unramified in~$K$. In
particular, if~$\ell$ is large, then $\ell$ is unramified in~$K$.
\end{remark}

The special case of Theorem~\ref{theorem:main:supplement} \emph{does} occur; cf. the following
example~\ref{example:SpecialCase}.

\begin{example}\label{example:SpecialCase}
Consider the polynomial $P(X)=(X^2+X+3)^2\in\QQ[X]$. By \cite{maisner-nart} and \cite{howeNartRitzenthaler} it follows
that $P(X)$ is the Weil polynomial of the Jacobian of a genus two curve~$C$ defined over $\FF_3$. The number of
$\FF_3$-rational points on the Jacobian is $P(1)=25$, so $\ell=5$ is an odd prime divisor of $|\jac(\FF_3)|$ not
dividing $q=p=3$. Notice that $P(X)\equiv X^4+2\sigma X^3+(2p+\sigma^2)X^2+2\sigma pX+p\pmod{\ell}$ with $\sigma=1$.
The complex roots of $P(X)$ are given by $\omega=\frac{-1+\sqrt{-11}}{2}$ and $\bar{\omega}$, and $\ell$ is unramified
in $K=\QQ(\omega)$. Since $3$ is a generator of $(\ZZ/5\ZZ)^\times$, it follows by
Theorem~\ref{theorem:main:supplement} that~$\jac(\FF_3)\simeq(\ZZ/\ell\ZZ)^2$ and~$\jac[\ell]\subseteq\jac(\FF_{81})$.
\end{example}

By Theorem~\ref{theorem:main} and \ref{theorem:main:supplement} we get the following corollary.

\begin{corollary}\label{corollary:WeilNonDegenerate}
Consider a genus two curve~$C$ defined over a finite field~$\FF_q$. Let $\ell$ be an odd prime number dividing the
number of $\FF_q$-rational points on the Jacobian~$\jac$, and with $\ell\nmid q$. Let $q$ be of multiplicative order
$k$ modulo $\ell$. If $\ell\nmid q-1$, then the Weil-pairing is non-degenerate on
$\jac(\FF_{q^k})[\ell]\times\jac(\FF_{q^k})[\ell]$.
\end{corollary}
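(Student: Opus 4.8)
The plan is to feed the structural dichotomy of Theorems~\ref{theorem:main} and~\ref{theorem:main:supplement} into the symplectic geometry of the Weil-pairing. Write $V=\jac(\FF_{q^k})[\ell]=\ker(\frob_k-1)$, where $\frob_k=\frob^k$ is the restriction of the $q^k$-power Frobenius to~$\jac[\ell]$. Since $k$ is the multiplicative order of $q$ modulo~$\ell$, we have $q^k\equiv 1\pmod\ell$, i.e.\ $\ell\mid q^k-1$. First I would apply the two theorems with $m=k$ to pin down~$V$, where $\sigma,\tau$ denote the quantities attached to the $q^k$-power Frobenius. If $\ell\nmid 4\tau$, then Theorem~\ref{theorem:main} gives that $V$ is bicyclic. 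If $\ell\mid 4\tau$, then in Theorem~\ref{theorem:main:supplement} the alternative $\omega_k\notin\ZZ$ is excluded, as it would force $\ell\nmid q^k-1$; hence $\omega_k\in\ZZ$ and $V=\jac[\ell]$. Combined with the fact that $\ell\mid q^k-1$ forces $\operatorname{rank}V\geq 2$ (the Tate-pairing is non-degenerate on~$V$ by~\cite{hess}), this shows that $V$ is either all of~$\jac[\ell]$ or bicyclic.

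If $V=\jac[\ell]$ there is nothing to do: the Weil-pairing is non-degenerate on $\jac[\ell]\times\jac[\ell]$. The content is the bicyclic case, where $\dim_{\ZZ/\ell\ZZ}V=2$ and $V$ is a priori only a two-dimensional subspace of the four-dimensional symplectic space~$\jac[\ell]$; the danger is that $V$ could be totally isotropic (a Lagrangian), and excluding this is the crux.

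The main tool is the Galois-compatibility of the Weil-pairing, $e_\ell(\frob x,\frob y)=e_\ell(x,y)^q$. Because $q^k\equiv 1\pmod\ell$, the map $\frob_k$ is then an isometry of~$e_\ell$, with adjoint $\frob_k^{-1}$. A short computation with this adjoint identifies the orthogonal complement as $V^\perp=\operatorname{im}(\frob_k-1)$, so that the radical of the restricted pairing is
    $$V\cap V^\perp=\ker(\frob_k-1)\cap\operatorname{im}(\frob_k-1).$$
Thus $e_\ell$ is non-degenerate on $V\times V$ if and only if $\frob_k$ acts semisimply on its eigenvalue-$1$ part. To establish that semisimplicity I would count multiplicities: the roots of $\bar P_k$ occur in pairs $(\alpha,q^k/\alpha)=(\alpha,\alpha^{-1})$, and $1$ is self-paired, so $1$ is a root of even multiplicity, namely $2$ or $4$. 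Multiplicity~$4$ means $\bar P_k(X)=(X-1)^4$, which forces $\ell\mid 4\tau$ exactly as in the proof of Theorem~\ref{theorem:main}, and this is excluded in the bicyclic case. Hence the algebraic multiplicity of the eigenvalue~$1$ is~$2$, coinciding with its geometric multiplicity $\dim V=2$; so $\frob_k$ has no non-trivial Jordan block at~$1$, the radical above vanishes, and $e_\ell$ is non-degenerate on $V\times V$.

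The one delicate point, which I expect to be the real obstacle, is ruling out the Lagrangian possibility in the bicyclic case, i.e.\ the semisimplicity of $\frob_k$ at the eigenvalue~$1$. All the arithmetic input is spent there: the parity of the multiplicity comes from the functional equation $(\alpha,q^k/\alpha)$ for the Frobenius eigenvalues, while the exclusion of the unipotent configuration $\bar P_k=(X-1)^4$ is precisely what the hypothesis $\ell\nmid 4\tau$ provides through Theorem~\ref{theorem:main} (the complementary case $\ell\mid 4\tau$ having been pushed into the full-torsion case by Theorem~\ref{theorem:main:supplement}). Once semisimplicity is secured, non-degeneracy is a formal consequence of the isometry property of~$\frob_k$.
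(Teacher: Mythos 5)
Your proof is correct, and in the substantive case it takes a genuinely different route from the paper's. The two arguments agree on the case $\ell\mid 4\tau$: both reduce it to $\jac(\FF_{q^k})[\ell]=\jac[\ell]$ via Theorem~\ref{theorem:main:supplement} (and both, incidentally, omit checking that theorem's hypothesis that $\ell$ be unramified in $\QQ(\omega_k)$ --- a gap you share with the paper rather than one you introduced). In the case $\ell\nmid 4\tau$, the paper imports the Rubin--Silverberg result \cite{rubin-silverberg07} that the Weil pairing is non-degenerate on $U\times V$ with $U=\jac(\FF_q)[\ell]$ and $V=\ker(\frob-q)\cap\jac[\ell]$, identifies $\jac(\FF_{q^k})[\ell]\simeq U\oplus V$ with both summands cyclic, and finishes with an explicit element computation ($x=x_U+x_V$ paired against $y+z$, anti-symmetry killing the terms within $U$ and within $V$). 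You instead re-derive the needed non-degeneracy internally: Galois-equivariance makes $\frob_k$ an isometry once $q^k\equiv 1\pmod{\ell}$, the radical of the restriction of $e_\ell$ to $V\times V$ is $\ker(\frob_k-1)\cap\operatorname{im}(\frob_k-1)$, and this vanishes because the algebraic multiplicity of the eigenvalue $1$ in $\bar P_k$ equals the geometric multiplicity $2$. What your route buys is self-containedness (no appeal to \cite{rubin-silverberg07}) and it sidesteps the paper's slightly loose identification of $V$ with ``$\jac(\FF_{q^k})[\ell]\setminus\jac(\FF_q)[\ell]$''; what it costs is that the key step --- evenness of the multiplicity of $1$ --- deserves one more line, since ``self-paired'' alone does not force evenness (the root $-1$ is also self-paired). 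You should either invoke the factorization $\bar P_k=\prod_{i}(X^2-\bar a_iX+q^k)$ inherited from the complex-conjugate pairing of the Weil numbers, so that each quadratic factor contributes $0$ or $2$ to the multiplicity of $1$ because its two roots multiply to $q^k\equiv 1$, or note that $\det\frob_k\equiv q^{2k}\equiv 1$ together with $\ell$ odd rules out an odd contribution from the root $-1$. With that line added, the argument is complete.
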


\begin{proof}
Let
    $$P_k(X)=X^4+2\sigma X^3+(2q^k+\sigma^2-\tau)X^2+2\sigma q^kX+q^{2k}$$
be the characteristic polynomial of the $q^k$-power endomorphism on the Jacobian~$\jac$. If $\ell\mid 4\tau$, then
$\jac[\ell]=\jac(\FF_{q^k})[\ell]$ by Theorem~\ref{theorem:main:supplement}, and the corollary follows.

Assume $\ell\nmid 4\tau$. Let $U=\jac(\FF_q)[\ell]$ and $V=\ker(\frob-q)\cap\jac[\ell]$, where $\frob$ is the $q$-power
Frobenius endomorphism on~$\jac$. Then the Weil-pairing $e_W$ is non-degenerate on $U\times V$ by
\cite{rubin-silverberg07}. By Theorem~\ref{theorem:main}, we know that
$V=\jac(\FF_{q^k})[\ell]\setminus\jac(\FF_q)[\ell]$ and that
    $$\jac(\FF_{q^k})[\ell]\simeq U\oplus V\simeq\ZZ/\ell\ZZ\times\ZZ/\ell\ZZ.$$
Now let $x\in\jac(\FF_{q^k})[\ell]$ be an arbitrary $\FF_{q^k}$-rational point of order $\ell$. Write $x=x_U+x_V$,
where $x_U\in U$ and $x_V\in V$. Choose $y\in V$ and $z\in U$, such that $e_W(x_U,y)\neq 1$ and $e_W(x_V,z)\neq 1$. We
may assume that $e_W(x_U,y)e_W(x_V,z)\neq 1$; if not, replace $z$ with $2z$. Since the Weil-pairing is anti-symmetric,
$e_W(x_U,z)=e_W(x_V,y)=1$. Hence,
    $$e_W(x,y+z)=e_W(x_U,y)e_W(x_V,z)\neq 1.$$
\end{proof}

\begin{proof}[Proof of Theorem~\ref{theorem:main:supplement}]
We see that
    $$P_m(X)\equiv (X^2+\sigma X+q^m)^2 \pmod{\ell};$$
since $P_m(1)\equiv 0\pmod{\ell}$, it follows that
    $$
    P_m(X)\equiv (X-1)^2(X-q^m)^2\pmod{\ell}.
    $$

Assume at first that $P_m(X)$ is irreducible in $\QQ[X]$. Let~$\heltal{K}$ denote the ring of integers of~$K$.
By~\cite[Proposition~8.3, p.~47]{neukirch}, it follows that $\ell\heltal{K}=\mathfrak{L}_1^2\mathfrak{L}_2^2$, where
$\mathfrak{L}_1=(\ell,\omega_m-1)\heltal{K}$ and $\mathfrak{L}_2=(\ell,\omega_m-q)\heltal{K}$. In particular, $\ell$
ramifies in $K$, and we have a contradiction. So $P_m(X)$ is reducible in $\QQ[X]$.

Let $f\in\ZZ[X]$ be the minimal polynomial of $\omega_m$. If $\deg f=3$, then it follows as above that $\ell$ ramifies
in~$K$. So $\deg f<3$. Assume that $\deg f=1$, i.e. that $\omega_m\in\ZZ$. Since $\omega_m^2=q^m$, we know that
$\omega_m=\pm q^{m/2}$. So $f(X)=X\mp q^{m/2}$. Since $f(X)$ divides $P(X)$ in $\ZZ[X]$, either $f(X)\equiv
X-1\pmod{\ell}$ or $f(X)\equiv X-q^m\pmod{\ell}$. It follows that $q^m\equiv 1\pmod{\ell}$. Thus, $\omega_m\equiv\pm
1\pmod{\ell}$. If $\omega_m\equiv -1\pmod{\ell}$, then $\frob_m$ does not fix~$\jac(\FF_{q^m})[\ell]$. This is a
contradiction. Hence, $\omega_m\equiv 1\pmod{\ell}$. But then $\frob_m$ is the identity on~$\jac[\ell]$. Thus, if
$\omega_m\in\ZZ$, then~$\jac[\ell]\subseteq\jac(\FF_{q^m})$.

Assume $\omega_m\notin\ZZ$. Then $\deg f=2$. Since $f(X)$ divides $P(X)$ in $\ZZ[X]$, it follows that
    $$f(X)\equiv (X-1)(X-q^m)\pmod{\ell};$$
to see this, we merely notice that if $f(X)$ is equivalent to the square of a polynomial modulo $\ell$, then $\ell$
ramifies in $K$. Notice also that if $q^m\equiv 1\pmod{\ell}$, then $\ell$ ramifies in $K$. So $q^m\not\equiv
1\pmod{\ell}$.

Now let $U=\ker(\frob_m-1)^2\cap\jac[\ell]$ and $V=\ker(\frob_m-q^m)^2\cap\jac[\ell]$. Then $U$ and $V$ are
$\frob_m$-invariant submodules of the $\ZZ/\ell\ZZ$-module~$\jac[\ell]$ of rank two, and~$\jac[\ell]\simeq U\oplus V$.
Now choose $x_1\in U$, such that $\frob_m(x_1)=x_1$, and expand this to a basis $(x_1,x_2)$ of $U$. Similarly, choose a
basis $(x_3,x_4)$ of $V$ with $\frob_m(x_3)=qx_3$. With respect to the basis $(x_1,x_2,x_3,x_4)$, $\frob_m$ is
represented by a matrix of the form
    $$
    M=\begin{bmatrix}
      1 & \alpha & 0 & 0 \\
      0 & 1 & 0 & 0 \\
      0 & 0 & q^m & \beta \\
      0 & 0 & 0 & q^m
      \end{bmatrix}.
    $$
Let $q^m$ be of multiplicative order $k$ modulo $\ell$. Notice that
    $$
    M^k=\begin{bmatrix}
      1 & k\alpha & 0 & 0 \\
      0 & 1 & 0 & 0 \\
      0 & 0 & 1 & k q^{m(k-1)}\beta \\
      0 & 0 & 0 & 1
      \end{bmatrix}.
    $$
Hence, the restriction of $\frob_m^k$ to~$\jac[\ell]$ has the characteristic polynomial $(X-1)^4$. Let $P_{mk}(X)$ be
the characteristic polynomial of the $q^{mk}$-power Frobenius endomorphism $\frob_{mk}=\frob_m^k$ of the Jacobian
$\jac$. Then
    \begin{equation*}\label{eq:P_mk(X)moduloL}
    P_{mk}(X)\equiv (X-1)^4\pmod{\ell}.
    \end{equation*}
Since $\omega_m$ is a $q^m$-Weil number of~$\jac$, we know that $\omega_m^k$ is a $q^{mk}$-Weil number of~$\jac$.
Assume $\omega_m^k\notin\QQ$. Then $K=\QQ(\omega_m^k)$. Let $h\in\ZZ[X]$ be the minimal polynomial of~$\omega_m^k$.
Then it follows that $h(X)\equiv (X-1)^2\pmod{\ell}$, and $\ell$ ramifies in $K$. So $\omega_m^k\in\QQ$, i.e. $h$ is of
degree one. But then $h(X)\equiv X-1\pmod{\ell}$, i.e. $\omega_m^k\equiv 1\pmod{\ell}$. So $\frob_m^k$ is the identity
map on~$\jac[\ell]$. Hence, $M^k=I$, i.e. $\alpha\equiv\beta\equiv 0\pmod{\ell}$. Thus, $\frob_m$ is represented by a
diagonal matrix $\diag(1,1,q^m,q^m)$ with respect to $(x_1,x_2,x_3,x_4)$. The theorem follows.
\end{proof}

For the $2$-torsion part, we get the following theorem.

{\samepage
\begin{theorem}\label{theorem:even}
Consider a genus two curve~$C$ defined over a finite field~$\FF_q$ of odd characteristic. Let
    $P_m(X)=X^4+sX^3+tX^2+sq^mX+q^{2m}$
be the characteristic polynomial of the $q^m$-power Frobenius endomorphism of the Jacobian~$\jac$. Assume
$|\jac(\FF_{q^m})|$ is even. Then
    $$
    \jac[2]\subseteq
            \begin{cases}
            \jac(\FF_{q^{4m}}), & \text{if $s$ is even;} \\
            \jac(\FF_{q^{6m}}), & \text{if $s$ is odd.}
            \end{cases}
    $$
\end{theorem}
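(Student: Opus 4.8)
The plan is to study the action of the $q^m$-power Frobenius $\frob_m$ on the $2$-torsion. Since $C$ has odd characteristic $p$, we have $2\neq p$, so $\jac[2]$ is a four-dimensional vector space over $\FF_2=\ZZ/2\ZZ$ on which $\frob_m$ restricts to a linear map represented by a matrix $M\in\Mat_4(\ZZ/2\ZZ)$. A point of $\jac[2]$ is $\FF_{q^{Nm}}$-rational exactly when it is fixed by $\frob_{Nm}=\frob_m^N$, so $\jac[2]\subseteq\jac(\FF_{q^{Nm}})$ if and only if $M^N=I$. Thus the theorem reduces to bounding the order of $M$. By the congruence $f\equiv\bar f\pmod{\ell}$ between the characteristic polynomials of an endomorphism and its restriction to $\jac[\ell]$ (recalled earlier, applied with $\ell=2$), the characteristic polynomial of $M$ is $\bar P_m:=P_m\bmod 2$. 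As $q$ is odd, $q^m\equiv q^{2m}\equiv 1\pmod 2$, hence
$$\bar P_m(X)\equiv X^4+sX^3+tX^2+sX+1\pmod 2,$$
with $s,t$ read modulo $2$.

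First I would pin down the parity of $t$. Evaluating at $1$ gives $P_m(1)=1+s+t+sq^m+q^{2m}\equiv t\pmod 2$, and since $|\jac(\FF_{q^m})|=P_m(1)$ is assumed even, $t$ must be even. This evenness hypothesis is exactly what forces $t$ even and rules out the degenerate factorizations; only the parity of $s$ then remains free, which is precisely the dichotomy in the statement.

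When $s$ is even, $\bar P_m(X)\equiv X^4+1=(X+1)^4\pmod 2$, so $M=I+N$ with $N$ nilpotent; as $N$ is $4\times 4$, $N^4=0$, and since in characteristic two $(I+N)^4=I+N^4$ (the intermediate binomial coefficients $\binom{4}{1},\binom{4}{2},\binom{4}{3}$ all vanish mod $2$), we get $M^4=I$ and hence $\jac[2]\subseteq\jac(\FF_{q^{4m}})$. When $s$ is odd, I would factor
$$\bar P_m(X)\equiv X^4+X^3+X+1=(X+1)^2(X^2+X+1)\pmod 2,$$
the quadratic factor being irreducible over $\FF_2$. Using the primary decomposition $\jac[2]\simeq W_1\oplus W_2$ into the $\frob_m$-invariant submodules attached to the two coprime factors, on $W_1$ one has $M=I+N_1$ with $N_1^2=0$, so $(M|_{W_1})^2=I$; on $W_2$ the factor $X^2+X+1$ divides $X^3+1$, so $(M|_{W_2})^3=I$. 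Therefore $M^6$ is the identity on both summands, i.e. $M^6=I$, giving $\jac[2]\subseteq\jac(\FF_{q^{6m}})$.

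The genuinely delicate points are bookkeeping rather than conceptual: reducing $P_m$ modulo $2$ correctly using the oddness of $q$, and justifying the two order computations for $M$ in $\mathrm{GL}_4(\ZZ/2\ZZ)$ — namely that a unipotent $4\times 4$ matrix over $\FF_2$ has order dividing $4$, and that the irreducible factor $X^2+X+1$ contributes an order-$3$ block because its roots are primitive cube roots of unity in $\FF_4$. Both follow from elementary linear algebra over $\FF_2$, so I expect no serious obstacle beyond verifying the two mod-$2$ factorizations and the characteristic-two power identity.
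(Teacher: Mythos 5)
Your proposal is correct and follows essentially the same route as the paper: reduce $P_m$ modulo $2$ using the oddness of $q$, deduce that $t$ is even from $P_m(1)\equiv t\pmod 2$, and then read off the order of Frobenius on $\jac[2]$ from the factorizations $(X+1)^4$ and $(X+1)^2(X^2+X+1)$. The only difference is that you spell out the linear-algebra justification (the unipotent power identity and the primary decomposition into $\ker(M+I)^2\oplus\ker(M^2+M+I)$) that the paper leaves implicit, which is a welcome but not substantive elaboration.
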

}

\begin{proof}
Since $q$ is odd,
    $$P_m(X)\equiv X^4+sX^3+tX^2+sX+1\pmod{2}.$$
Assume at first that $s$ is even. Since $P_m(1)$ is even, it follows that $t$ is even; but then
    $$P_m(X)\equiv (X-1)^4\equiv X^4-1\pmod{2}.$$
Hence, $\jac[2]\subseteq\jac(\FF_{q^{4m}})$ in this case.

Now assume that $s$ is odd. Again $t$ must be even; but then
    $$P_m(X)\equiv (X^2-1)(X^2+X+1)\pmod{2}.$$
Since $f(X)=X^2+X+1$ has the complex roots $\xi=-\frac{1}{2}(1\pm i\sqrt{3})$, and $\xi^3=1$, it follows that
$\jac[2]\subseteq\jac(\FF_{q^{6m}})$ in this case.
\end{proof}

\section{Supersingular curves}\label{sec:supersingularCurves}

Consider a genus two curve~$C$ defined over a finite field~$\FF_q$ of characteristic $p$.~$C$~is called
\emph{supersingular}, if~$\jac$ has no $p$-torsion. From \cite{maisner-nart} we have the following theorem.

\begin{theorem}\label{theorem:supersingular}
Consider a polynomial $f\in\ZZ[X]$ of the form
    $$f(X)=f_{s,t}(X)=X^4+sX^3+tX^2+sqX+q^2,$$
where $q=p^a$. If $f$ is the Weil polynomial of the Jacobian of a supersingular genus two curve defined over the finite
field~$\FF_q$, then $(s,t)$ belongs to table~\ref{table:conditions}.
\end{theorem}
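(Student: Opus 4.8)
The plan is to reduce supersingularity to an arithmetic condition on the roots of $f$ and then carry out an integrality-driven enumeration. Because $\jac$ is an abelian surface, the hypothesis that $\jac$ has no $p$-torsion — that its $p$-rank is zero — is equivalent to $\jac$ being supersingular in the strong sense: over $\bar\FF_q$ it is isogenous to a product of supersingular elliptic curves, equivalently the Newton polygon of $f$ is the single segment of slope $\tfrac12$. In terms of the roots $\omega_1,\dots,\omega_4$ of $f$, which are $q$-Weil numbers with $|\omega_j|=\sqrt q$, this means each $\omega_j$ has normalized $p$-adic valuation $a/2$ and, more precisely, that $\zeta_j:=\omega_j/\sqrt q$ is a root of unity. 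So the first step is to invoke Honda--Tate theory to record that, for a supersingular Jacobian, every root of $f$ has the form $\omega_j=\sqrt q\,\zeta_j$ with $\zeta_j$ a root of unity.

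Second, I would use the shape of $f$. Since $f$ has real coefficients and all roots have absolute value $\sqrt q$, complex conjugation sends $\omega$ to $q/\omega$, so the multiset $\{\zeta_1,\dots,\zeta_4\}$ is stable under $\zeta\mapsto\bar\zeta=\zeta^{-1}$. Expanding $f(X)=\prod_{j}(X-\sqrt q\,\zeta_j)$ and comparing with $X^4+sX^3+tX^2+sqX+q^2$ yields
\[
s=-\sqrt q\sum_{j}\zeta_j,\qquad t=q\sum_{i<j}\zeta_i\zeta_j,
\]
with both sums real. Thus the integrality of $s$ and $t$ becomes the constraint $\sqrt q\sum_j\zeta_j\in\ZZ$ together with $q\sum_{i<j}\zeta_i\zeta_j\in\ZZ$ on conjugation-stable quadruples of roots of unity.

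The third step is a case analysis on the parity of $a$, where $q=p^a$. If $a$ is even then $\sqrt q=p^{a/2}\in\ZZ$, so $\sum_j\zeta_j$ is a rational algebraic integer, hence a rational integer; a sum of four roots of unity that lands in $\ZZ$, together with Galois-stability, forces each $\zeta_j$ to have small order (so that $\varphi(n)\le 4$, i.e.\ $n\in\{1,2,3,4,6,\dots\}$), and one reads off the finitely many admissible $(s,t)$. If $a$ is odd then $\sqrt q=p^{(a-1)/2}\sqrt p$ is irrational, and $\sqrt p\sum_j\zeta_j\in\ZZ$ forces either $\sum_j\zeta_j=0$ or a configuration in which the cyclotomic sum itself supplies the factor $\sqrt p$; this is precisely what occurs for roots of unity of order $8$, $12$ and $24$, and it produces the remaining rows of Table~\ref{table:conditions}. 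In each branch I would list the conjugation- and inversion-stable quadruples, compute $(s,t)$, and match against the table; the bound on the orders of the $\zeta_j$ also explains the embedding degrees $k\le 24$ recurring in Theorem~\ref{theorem:TEDss}.

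I expect the enumeration to be the main obstacle rather than any single conceptual point. The bookkeeping must be exhaustive — every conjugation-stable multiset of four roots of unity compatible with the parity of $a$ has to be examined, and the irreducible case must be separated from the reducible ones (where $f$ splits as a product of two elliptic Weil polynomials, or as a square such as $(X^2+sX+q)^2$). The small characteristics $p=2$ and $p=3$ deserve individual attention, since there the supersingular locus and the realizable $(s,t)$ behave irregularly. Finally, I am only proving necessity here — that a supersingular Weil polynomial must lie in the table — so I can sidestep the converse realizability question (which Honda--Tate existence, or the explicit constructions of~\cite{maisner-nart}, would settle); this keeps the argument to ruling configurations out rather than to exhibiting curves.
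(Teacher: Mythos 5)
First, a point of orientation: the paper does not prove this statement at all --- it is imported verbatim from \cite{maisner-nart} (``From \cite{maisner-nart} we have the following theorem''), so there is no internal proof to compare you against; your sketch is an attempt to reconstruct the cited classification. The first half of your plan is sound and standard: for an abelian surface the symmetric Newton polygon of height four admits no slope strictly between $0$ and $\tfrac12$ other than $\tfrac12$ itself, so $p$-rank zero does force all slopes to equal $\tfrac12$; each root of $f$ is then $\sqrt q\,\zeta_j$ with $\zeta_j$ a root of unity, the multiset $\{\zeta_1,\dots,\zeta_4\}$ is stable under $\zeta\mapsto\zeta^{-1}$, and integrality of $s$ and $t$ constrains it further, with the parity of $a$ governing whether $\sqrt q$ is rational.

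The genuine gap is that this integrality-driven enumeration does not terminate at table~\ref{table:conditions}: it produces strictly more pairs $(s,t)$ than the table contains, and nothing in your argument excludes the extra ones. Concretely, for $a$ even take $\zeta_1=\dots=\zeta_4=1$; then $f(X)=(X-\sqrt q)^4$ gives $(s,t)=(-4\sqrt q,6q)$ with $s,t\in\ZZ$, and this \emph{is} the Weil polynomial of the supersingular abelian surface $E\times E$, where $E$ is the supersingular elliptic curve of trace $2\sqrt q$ --- yet $(-4\sqrt q,6q)$ is not in the table. Likewise $\{1,1,i,-i\}$ yields $(-2\sqrt q,2q)$ and $\{1,1,\zeta_3,\zeta_3^2\}$ yields $(-\sqrt q,0)$, both realized by products of supersingular elliptic curves for suitable $p$ and both absent from the table. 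These candidates are eliminated not by integrality but by the question of which supersingular isogeny classes of abelian surfaces actually contain the Jacobian of a \emph{curve} (equivalently, carry an indecomposable principal polarization); that analysis --- which is also the source of the congruence conditions on $p$ in the third column --- is the real content of \cite{maisner-nart} and \cite{howeNartRitzenthaler}, and your proposal does not engage with it. As written, the plan shows only that $(s,t)$ lies in a larger, explicitly computable list, not that it lies in table~\ref{table:conditions}; the ``bookkeeping'' you defer is not where the difficulty sits.
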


\begin{table}[!ht]
\centering \caption{Conditions for $f=X^4+sX^3+tX^2+sqX+q^2$ to be the Weil polynomial of the Jacobian of a
supersingular genus two curve defined over~$\FF_q$, where~$q=p^a$.}\label{table:conditions}
\begin{tabular}{|l|l|l|}
    \hline
    Case & $(s,t)$              & Condition \\ \hline\hline
    \textsc{i}    & $(0,0)$              & $a$ odd, $p\neq 2$, or $a$ even, $p\not\equiv 1\pmod{8}$. \\ \hline
    \textsc{ii}   & $(0,q)$              & $a$ odd. \\ \hline
    \textsc{iii}  & $(0,-q)$             & $a$ odd, $p\neq 3$, or $a$ even, $p\not\equiv 1\pmod{12}$. \\ \hline
    \textsc{iv}   & $(\pm\sqrt{q},q)$    & $a$ even, $p\not\equiv 1\pmod{5}$. \\ \hline
    \textsc{v}    & $(\pm\sqrt{5q},3q)$  & $a$ odd, $p=5$. \\ \hline
    \textsc{vi}   & $(\pm\sqrt{2q},q)$,  & $a$ odd, $p=2$. \\ \hline
    \textsc{vii}  & $(0,-2q)$            & $a$ odd. \\ \hline
    \textsc{viii} & $(0,2q)$             & $a$ even, $p\equiv 1\pmod{4}$. \\ \hline
    \textsc{ix}   & $(\pm 2\sqrt{q},3q)$ & $a$ even, $p\equiv 1\pmod{3}$. \\ \hline
    \end{tabular}
\end{table}

\begin{remark}
By \cite{howeNartRitzenthaler}, in each of the cases in table~\ref{table:conditions} we can find a~$q$ such that
$f_{s,t}(X)$ is the Weil polynomial of the Jacobian of a supersingular genus two curve defined over~$\FF_q$.
\end{remark}

Using Theorem~\ref{theorem:main}, \ref{theorem:main:supplement} and \ref{theorem:supersingular} we get the following
explicit description of the $\ell$-torsion subgroup of the Jacobian of a supersingular genus two curve.

\begin{theorem}\label{theorem:TEDss}
Consider a supersingular genus two curve~$C$ defined over~$\FF_q$. Let~$\ell$ be a prime number dividing the number of
$\FF_q$-rational points on the Jacobian~$\jac$, and with $\ell\nmid q$. Depending on the cases in
table~\ref{table:conditions} we get the following properties of~$\jac$.
    \begin{description}
    \item[Case~\textsc{i}] $-q^2\equiv q^4\equiv 1\pmod{\ell}$ and~$\jac[\ell]\subseteq\jac(\FF_{q^4})$. If $\ell\neq 2$, then~$\jac(\FF_q)[\ell]$ is cyclic.
    \item[Case~\textsc{ii}] $q^3\equiv 1\pmod{\ell}$, $\jac[\ell]\subseteq\jac(\FF_{q^6})$ and~$\jac(\FF_q)$ is cyclic.
    If $\ell\neq 3$, then $q\not\equiv 1\pmod{\ell}$.
    \item[Case~\textsc{iii}] $-q^3\equiv q^6\equiv 1\pmod{\ell}$ and~$\jac[\ell]\subseteq\jac(\FF_{q^6})$. If $\ell\neq 3$, then~$\jac(\FF_q)[\ell]$ is cyclic.
    \item[Case~\textsc{iv}] $q\not\equiv q^5\equiv 1\pmod{\ell}$, $\jac[\ell]\subseteq\jac(\FF_{q^{10}})$ and~$\jac(\FF_q)$ is cyclic.
    \item[Case~\textsc{v}] $q\not\equiv q^5\equiv 1\pmod{\ell}$, $\jac[\ell]\subseteq\jac(\FF_{q^{10}})$ and~$\jac(\FF_q)$ is cyclic.
    \item[Case~\textsc{vi}] $-q^6\equiv q^{12}\equiv 1\pmod{\ell}$, $\jac[\ell]\subseteq\jac(\FF_{q^{24}})$ and~$\jac(\FF_q)$ is cyclic.
    \item[Case~\textsc{vii}] $q\equiv 1\pmod{\ell}$ and~$\jac[\ell]\subseteq\jac(\FF_{q^2})$. If $\ell\neq 2$, then~$\jac(\FF_q)[\ell]$ is bicyclic.
    \item[Case~\textsc{viii}] $-q\equiv q^2\equiv 1\pmod{\ell}$ and~$\jac[\ell]\subseteq\jac(\FF_{q^2})$. If $\ell\neq 2$, then~$\jac(\FF_q)[\ell]$ is bicyclic.
    \item[Case~\textsc{ix}] If $\ell\neq 3$, then $q\not\equiv q^3\equiv 1\pmod{\ell}$, $\jac[\ell]\subseteq\jac(\FF_{q^3})$ and~$\jac(\FF_q)[\ell]$ is bicyclic.
    \end{description}
\end{theorem}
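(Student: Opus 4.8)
The plan is to treat the nine cases of table~\ref{table:conditions} one at a time, reading off everything from the explicit Weil polynomial $P(X)=f_{s,t}(X)=X^4+sX^3+tX^2+sqX+q^2$ supplied by Theorem~\ref{theorem:supersingular}. Throughout, let $\bar P\in(\ZZ/\ell\ZZ)[X]$ be the reduction of $P$; by the congruence $f\equiv\bar f\pmod\ell$ recalled earlier, $\bar P$ is the characteristic polynomial of the Frobenius $\frob$ acting on $\jac[\ell]$. In each case I must determine three things: the congruence satisfied by $q$ modulo $\ell$; the full embedding degree $\fullED$, which is the multiplicative order of $\frob$ in $\mathrm{GL}(\jac[\ell])$; and the $\ZZ/\ell\ZZ$-rank of $\jac(\FF_q)[\ell]=\ker(\frob-1)$, i.e. the geometric multiplicity of the eigenvalue~$1$.

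First I would pin down the congruences. Since $|\jac(\FF_q)|=P(1)$, the hypothesis $\ell\mid|\jac(\FF_q)|$ is precisely $P(1)\equiv 0\pmod\ell$. In the rational cases \textsc{i}, \textsc{ii}, \textsc{iii}, \textsc{vii}, \textsc{viii} this reads $q^2\equiv-1$, $1+q+q^2\equiv0$, $1-q+q^2\equiv0$, $(q-1)^2\equiv0$, $(q+1)^2\equiv0$ respectively, from which the stated relations follow at once. In the cyclotomic cases \textsc{iv}, \textsc{v}, \textsc{vi}, \textsc{ix} the substitution $X=\sqrt q\,Y$ (legitimate because $\sqrt q\in\ZZ$ whenever the corresponding $s$ is an integer) turns $P$ into $q^2$ times a palindromic polynomial in $Y$ that factors as a power of a cyclotomic polynomial $\Phi_n$ with $n\in\{3,6,10,24\}$ (for instance $P(1)=\Phi_{10}(\sqrt q)$ in case~\textsc{iv} and $P(1)=\Phi_{3}(\sqrt q)^2$ or $\Phi_{6}(\sqrt q)^2$ in case~\textsc{ix}); the elementary fact that $\ell\mid\Phi_n(a)$ with $\ell\nmid n$ forces $a$ to have exact order $n$ modulo $\ell$ then yields the relations $q^5\equiv1$, $q^6\equiv-1$, $q^3\equiv1$ together with $q\not\equiv1$.

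Next I would compute the order of $\frob$ and the rank. Factoring $\bar P$ with the congruence in hand produces four eigenvalues lying among $\pm1,\pm q$ or $\sqrt q$ times roots of unity; when they are distinct modulo $\ell$ --- the generic situation in cases \textsc{i}--\textsc{vi} --- $\frob$ is diagonalizable, $\fullED$ is the least common multiple of their orders (giving $4,6,6,10,10,24$ respectively), and the eigenvalue~$1$ is simple, so $\jac(\FF_q)[\ell]$ is cyclic. The rank bound is cleanest via Theorem~\ref{theorem:main}: in each of \textsc{i}--\textsc{vi} the congruence gives $\ell\nmid q-1$ for every admissible odd $\ell$ apart from a few small primes, and for those $\ell$ with also $\ell\nmid 4\tau$ the theorem forbids rank two and forces cyclicity. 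To upgrade this to cyclicity of the \emph{whole} group $\jac(\FF_q)$ in cases \textsc{ii}, \textsc{iv}, \textsc{v}, \textsc{vi}, I would examine the $\ell$-part for every prime $\ell\mid P(1)$: the generic primes are handled as above, and at each small exceptional prime (those dividing $q-1$ or $4\tau$, e.g.\ $\ell=3$ in case~\textsc{ii}, where $P(1)=\Phi_3(q)$) I would instead show $v_\ell(P(1))=1$ --- which for the cyclotomic values follows from the standard fact that $\ell\mid n$ and $\ell\mid\Phi_n(a)$ imply $v_\ell(\Phi_n(a))=1$ --- so that the $\ell$-part is $\ZZ/\ell\ZZ$ and hence cyclic.

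The heart of the argument, and the main obstacle, lies in the cases where $1$ is a repeated root of $\bar P$, namely the bicyclic cases \textsc{vii}, \textsc{viii}, \textsc{ix}. There the \emph{algebraic} multiplicity of $1$ is two, but the conclusions ``bicyclic'' and ``$\jac[\ell]\subseteq\jac(\FF_{q^\fullED})$'' require the \emph{geometric} multiplicity to be two as well, i.e.\ that $\frob$ carry no nontrivial Jordan block at $1$. In cases \textsc{viii} and \textsc{ix} one computes $4\tau=0$, so $\ell\mid 4\tau$ unconditionally and Theorem~\ref{theorem:main:supplement}(2) applies (for $\ell$ odd and unramified, with $\omega_m\notin\ZZ$), delivering $\jac(\FF_q)[\ell]\simeq(\ZZ/\ell\ZZ)^2$ and, through the order $k$ of $q$ modulo $\ell$, the containments $\jac[\ell]\subseteq\jac(\FF_{q^2})$ and $\jac[\ell]\subseteq\jac(\FF_{q^3})$. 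Case~\textsc{vii}, where $\bar P\equiv(X-1)^2(X+1)^2$ but $\ell\mid q-1$ places it outside the hypotheses of both Theorem~\ref{theorem:main} and Theorem~\ref{theorem:main:supplement}, I would settle using the global relation $\frob^2=q$ in $\End(\jac)$ --- visible from $P(X)=(X^2-q)^2$ and the supersingular splitting $\jac\sim E^2$ with $\frob_E^2=q$ --- which forces $\frob^2\equiv1$ on $\jac[\ell]$, hence diagonalizability, bicyclicity, and $\jac[\ell]\subseteq\jac(\FF_{q^2})$. The remaining exceptional primes $\ell\in\{2,3\}$ singled out in the statement (together with any primes ramifying in $K$) escape both general theorems and would be disposed of by the same direct analysis: reading off the Jordan type of $\frob$ on $\jac[\ell]$ from its minimal polynomial and from $v_\ell(P(1))$, and checking in each instance that the stated power of $\frob$ already annihilates any unipotent part (for example $\frob^6\equiv I$ modulo $3$ even when a Jordan block is present, because $3\mid 6$).
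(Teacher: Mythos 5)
Your proposal is correct in substance and follows the same overall strategy as the paper: a case-by-case reading of the Weil polynomial from table~\ref{table:conditions}, with the congruences extracted from $P(1)\equiv 0\pmod{\ell}$, the full embedding degree from the fact that the Frobenius eigenvalues are $\sqrt{q}$ times roots of unity, and the rank of $\jac(\FF_q)[\ell]$ from Theorem~\ref{theorem:main} via $4\tau$ in the generic cases. You diverge from the paper in three local choices, all legitimate. First, in case~\textsc{ix} you invoke Theorem~\ref{theorem:main:supplement}(2) (noting $\tau=0$ and that $\ell\neq 3$ is unramified in $\QQ(\sqrt{-3})$), whereas the paper uses Tate's theorem to split $\jac\sim E\times E$ and then argues that $\jac(\FF_q)[\ell]$ is bicyclic or full; both close the geometric-multiplicity gap you correctly identify as the crux. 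Second, in case~\textsc{vii} you use the relation $\frob^2=q$ in $\End(\jac)$ (i.e.\ the minimal polynomial of $\frob$ is the squarefree radical $X^2-q$ of $P$, which is Tate's theorem again) to get diagonalizability directly; the paper routes through $\jac\sim E\times E$, and your version is if anything cleaner since it works uniformly for all $\ell$. Third, your refinement that $v_\ell(P(1))=1$ at the exceptional primes (e.g.\ $\ell=3$ with $q\equiv 1\pmod 3$ in case~\textsc{ii}) is a genuine improvement: the paper's appeal to Theorem~\ref{theorem:main} silently excludes exactly those primes, so your valuation argument is what actually completes the claim that the whole group $\jac(\FF_q)$ is cyclic. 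One caveat: your parenthetical justification for the substitution $X=\sqrt{q}\,Y$ --- ``$\sqrt{q}\in\ZZ$ whenever the corresponding $s$ is an integer'' --- is false in cases~\textsc{v} and~\textsc{vi}, where $a$ is odd and only $\sqrt{5q}$ resp.\ $\sqrt{2q}$ are integers; there you should instead work with the norm $f_{s,t}(X)f_{-s,t}(X)$ (which divides $X^{20}-q^{10}$ resp.\ $\frac{X^{24}-q^{12}}{X^8-q^4}$, giving $q^5\equiv 1$ and $q^6\equiv-1$), or pass to $\FF_{\ell^2}$. This is a repairable misstatement rather than a gap, and the identification of the precise cyclotomic factor (e.g.\ $\Phi_5$ versus $\Phi_{10}$ in case~\textsc{iv}, depending on the sign of $s$) should likewise be stated with more care, but none of this affects the conclusions.
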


\begin{corollary}
If $\ell>3$, then the full embedding degree with respect to $\ell$ of the Jacobian~$\jac$ of a supersingular genus two
curve defined over~$\FF_q$ is at most $24$, and~$\jac(\FF_q)[\ell]$ is of rank at most two as a $\ZZ/\ell\ZZ$-module.
\end{corollary}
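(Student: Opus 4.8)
The plan is to read the corollary off directly from Theorem~\ref{theorem:TEDss}, which already records, for each of the nine cases of Theorem~\ref{theorem:supersingular}, both a field $\FF_{q^n}$ over which $\jac[\ell]$ becomes rational and the structure (cyclic or bicyclic) of $\jac(\FF_q)[\ell]$ as a $\ZZ/\ell\ZZ$-module. The only real work is to check that the hypothesis $\ell>3$ removes every exceptional proviso occurring in that theorem, after which the statement is a finite maximization.

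First I would fix a supersingular genus two curve $C/\FF_q$ and a prime $\ell$ with $\ell\mid|\jac(\FF_q)|$, $\ell\nmid q$ and $\ell>3$. By Theorem~\ref{theorem:supersingular} the pair $(s,t)$ lies in one of the nine rows of Table~\ref{table:conditions}, so exactly one of Cases~\textsc{i}--\textsc{ix} of Theorem~\ref{theorem:TEDss} applies. Since $\ell>3$ we have $\ell\neq 2$ and $\ell\neq 3$, so every clause of the form ``if $\ell\neq 2$'' or ``if $\ell\neq 3$'' in Theorem~\ref{theorem:TEDss} is in force. In particular each of the nine cases then supplies both an explicit containment $\jac[\ell]\subseteq\jac(\FF_{q^n})$ and the assertion that $\jac(\FF_q)[\ell]$ is either cyclic or bicyclic.

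For the embedding-degree bound I would tabulate the values of $n$ delivered by the nine cases, namely $n=4$ in Case~\textsc{i}, $n=6$ in Cases~\textsc{ii} and \textsc{iii}, $n=10$ in Cases~\textsc{iv} and \textsc{v}, $n=24$ in Case~\textsc{vi}, $n=2$ in Cases~\textsc{vii} and \textsc{viii}, and $n=3$ in Case~\textsc{ix}. Since $\jac[\ell]\subseteq\jac(\FF_{q^n})$ means $\frob^n$ acts as the identity on $\jac[\ell]$, the full embedding degree $\fullED$, being the least exponent with this property, divides $n$; hence $\fullED\le n\le 24$ in every case, the maximum $24$ being attained in Case~\textsc{vi}. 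For the rank statement I would note that the cyclic cases give $\jac(\FF_q)[\ell]$ of rank at most one, and the bicyclic cases give rank exactly two, so in all nine cases the rank as a $\ZZ/\ell\ZZ$-module is at most two.

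I do not expect a genuine obstacle here: the mathematical content is entirely carried by Theorem~\ref{theorem:TEDss}, and what remains is bookkeeping. The one point that must be verified rather than merely assumed is that $\ell>3$ clears \emph{every} proviso across the whole table simultaneously, so that no case is left without both a field of rationality and a rank conclusion; this holds because the only primes excluded anywhere in Theorem~\ref{theorem:TEDss} are $2$ and $3$. Once this is confirmed, taking the maximum of the six distinct values of $n$ finishes the argument.
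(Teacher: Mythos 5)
Your proposal is correct and matches the paper's (implicit) argument: the corollary is read off directly from Theorem~\ref{theorem:TEDss} by observing that $\ell>3$ voids every ``if $\ell\neq 2$'' and ``if $\ell\neq 3$'' proviso, that each of the nine cases then yields $\jac[\ell]\subseteq\jac(\FF_{q^n})$ with $n\in\{2,3,4,6,10,24\}$, and that cyclic or bicyclic $\jac(\FF_q)[\ell]$ has rank at most two. The paper offers no further detail, so your case-by-case bookkeeping is exactly the intended verification.
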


\begin{proof}[Proof of Theorem~\ref{theorem:TEDss}]
In the following we consider each case in table~\ref{table:conditions} separately. Throughout this proof, assume that
    $$f(X)=X^4+sX^3+tX^2+sqX+q^2$$
is the Weil polynomial of the Jacobian~$\jac$ of some supersingular genus two curve~$C$ defined over the finite
field~$\FF_q$ of characteristic $p$, and let $\ell$ be a prime number dividing $f(1)$.

\subsection*{The case $s=0$}\label{subsec:s=0}

First consider the cases~\textsc{i}, \textsc{ii}, \textsc{iii}, \textsc{vii} and \textsc{viii} of
table~\ref{table:conditions}.

\subsubsection*{Case \textsc{i}}

If $(s,t)=(0,0)$, then $f(1)=1+q^2\equiv 0\pmod{\ell}$, i.e. $q^2\equiv -1\pmod{\ell}$. So $f(X)\equiv
X^4-1\pmod{\ell}$, $q^4\equiv 1\pmod{\ell}$ and~$\jac[\ell]\subseteq\jac(\FF_{q^4})$. $\tau=2q$ in
Theorem~\ref{theorem:main}, so if $\ell\neq 2$, then~$\jac(\FF_q)[\ell]$ is cyclic.

\subsubsection*{Case \textsc{ii}}

If $(s,t)=(0,q)$, then the roots of $f$ modulo $\ell$ are given by $\pm 1$ and $\pm q$. Since $f(1)=q^2+q+1\equiv
0\pmod{\ell}$, we know that $q\equiv\frac{1}{2}(-1\pm\sqrt{-3})\pmod{\ell}$. It follows that $q^3\equiv 1\pmod{\ell}$
and~$\jac[\ell]\subseteq\jac(\FF_{q^6})$. If $\ell=2$, then $p\neq 2$, and $f(1)$ is odd. So $\ell\neq 2$. $\tau=q$ in
Theorem~\ref{theorem:main}, so~$\jac(\FF_q)$ is cyclic.

\subsubsection*{Case \textsc{iii}}

If $(s,t)=(0,-q)$, then the roots of $f$ modulo $\ell$ are given by $\pm 1$ and $\pm q$. Since $f(1)=q^2-q+1\equiv
0\pmod{\ell}$, we know that $q\equiv\frac{1}{2}(1\pm\sqrt{-3})\pmod{\ell}$. It follows that $q^6\equiv 1\pmod{\ell}$
and~$\jac[\ell]\subseteq\jac(\FF_{q^6})$. As in case \textsc{ii}, $\ell\neq 2$. Now $\tau=3q$, so if $\ell\neq 3$, then
$\jac(\FF_q)[\ell]$ is cyclic.

\subsubsection*{Case \textsc{vii}}

If $(s,t)=(0,-2q)$, then $q\equiv 1\pmod{\ell}$ and $f(X)=(X^2-q)^2$. Since~$q$ is an odd power of~$p$, $X^2-q$ is
irreducible over $\QQ$. So by \cite[Theorem~2]{tate}, $\jac\simeq E\times E$ for some supersingular elliptic curve~$E$.
It follows that $\jac[\ell]\subseteq\jac(\FF_{q^2})$. $\tau=4q$, so if $\ell\neq 2$, then~$\jac(\FF_q)[\ell]$ is
bicyclic.

\subsubsection*{Case \textsc{viii}}

If $(s,t)=(0,2q)$, then $q\equiv -1\pmod{\ell}$ and $f(X)=(X^2+q)^2$. Since $X^2+q$ is irreducible over $\QQ$, it
follows that~$\jac\simeq E\times E$ for some supersingular elliptic curve~$E$. So $q^2\equiv 1\pmod{\ell}$
and~$\jac[\ell]\subseteq\jac(\FF_{q^2})$. $\tau=0$ and $\omega=i\sqrt{q}$ is a $q$-Weil number of $\jac$. Since $q$ is
an even power of~$p$, $K=\QQ(\omega)=\QQ(i)$ is of discriminant $d_K=-4$. Hence, if $\ell\neq 2$,
then~$\jac(\FF_q)[\ell]$ is bicyclic by Theorem~\ref{theorem:main:supplement}.

\subsection*{Case \textsc{iv}--\textsc{vi}}\label{subsec:cyclic}

Now we consider the cases~\textsc{iv}, \textsc{v} and \textsc{vi} of table~\ref{table:conditions}.

\subsubsection*{Case \textsc{iv}}

If $(s,t)=(\sqrt{q},q)$, then $4\tau=5q$ in Theorem~\ref{theorem:main}. Since $f(1)$ is odd, we know that $\ell\neq 2$.
If $\ell$ divides $4\tau$, then $\ell=5$; $\ell\nmid q$, since~$C$ is supersingular. But then
$f(1)=q^2+q\sqrt{q}+q+\sqrt{q}+1\equiv 0\pmod{5}$, i.e. $q\equiv 2\pmod{5}$. Since $a$ is even and $2$ is not a
quadratic residue modulo $5$, this is impossible. So $\ell\nmid 4\tau$. If $q\equiv 1\pmod{\ell}$, then $f(1)\equiv
5\pmod{\ell}$, i.e. $\ell=5$. But then $\ell$ divides $4\tau$, a contradiction. So~$\jac(\FF_q)$ is cyclic by
Theorem~\ref{theorem:main}. From $f(1)\equiv 0\pmod{\ell}$ it follows that $q^5\equiv 1\pmod{\ell}$. Since the complex
roots of $f$ are of the form $\sqrt{q}\xi$, where $\xi$ is a primitive $5^{\text{th}}$ root of unity, it follows that
$\jac[\ell]\subseteq\jac(\FF_{q^{10}})$. The case $(s,t)=(-\sqrt{q},q)$ follows similarly.

\subsubsection*{Case \textsc{v}}

If $(s,t)=(\sqrt{5q},3q)$ and $p=5$, then $4\tau$ is a power of $5$ in Theorem~\ref{theorem:main}. Since $f(1)$ is odd,
we know that $\ell\neq 2$. If $\ell$ divides $4\tau$, then $\ell=5$. Since~$C$ is supersingular and defined over a
field of characteristic $p=5$, this is a contradiction. So $\ell\nmid 4\tau$. If $q\equiv 1\pmod{\ell}$, then
$f(1)\equiv 5+2\sqrt{5}\equiv 0\pmod{\ell}$, and it follows that $\ell=5$. So~$\jac(\FF_q)$ is cyclic by
Theorem~\ref{theorem:main}. From $f(1)\equiv 0\pmod{\ell}$ it follows that $q^5\equiv 1\pmod{\ell}$. Since the complex
roots of $f$ are of the form $\sqrt{q}\xi$, where $\xi$ is a primitive $10^{\text{th}}$ root of unity, it follows that
$\jac[\ell]\subseteq\jac(\FF_{q^{10}})$. The case $(s,t)=(-\sqrt{5q},3q)$ follows similarly.

\subsubsection*{Case \textsc{vi}}

If $(s,t)=(\sqrt{2q},q)$ and $p=2$, then $4\tau=3\cdot 2^a$ for some number~$a\in\NN$. Hence, if $\ell$ divides
$4\tau$, then $\ell=3$. But $3\nmid f(1)$; thus, $\ell\nmid 4\tau$. If $q\equiv 1\pmod{\ell}$, then $f(1)\equiv
3+2\sqrt{2}\equiv 0\pmod{\ell}$, and it follows that $\ell=1$. So~$\jac(\FF_q)$ is cyclic by
Theorem~\ref{theorem:main}. From $f(1)\equiv 0\pmod{\ell}$ it follows that $q^6\equiv -1\pmod{\ell}$. Since the complex
roots of $f$ are of the form $\sqrt{q}\xi$, where $\xi$ is a primitive $24^{\text{th}}$ root of unity, it follows
that~$\jac[\ell]\subseteq\jac(\FF_{q^{24}})$. The case $(s,t)=(-\sqrt{2q},q)$ follows similarly.

\subsection*{Case \textsc{ix}}\label{subsec:bicyclic}

Finally, consider the case~\textsc{ix}. Assume that $(s,t)=(-2\sqrt{q},3q)$. We see that $f(X)=g(X)^2$, where
$g(X)=X^2-\sqrt{q}X+q$. Since the complex roots of $g$ are given by $\frac{1}{2}(1\pm\sqrt{-3})\sqrt{q}$, $g$ is
irreducible over $\QQ$. So by~\cite[Theorem~2]{tate}, $\jac\simeq E\times E$ for some supersingular elliptic curve~$E$.
Hence, either~$\jac(\FF_q)[\ell]$ is bicyclic or equals the full $\ell$-torsion subgroup of~$\jac$.

Assume~$\jac(\FF_q)[\ell]=\jac[\ell]$. Then $q\equiv 1\pmod{\ell}$, i.e. $\sqrt{q}\equiv\pm 1\pmod{\ell}$. But then
$f(1)\equiv 9\equiv 0\pmod{\ell}$ or $f(1)\equiv 1\equiv 0\pmod{\ell}$, i.e. $\ell=3$.

Since $f(1)=(1-\sqrt{q}+q)^2\equiv 0\pmod{\ell}$, we know that $q\equiv \frac{1}{2}(-1\pm\sqrt{-3})\pmod{\ell}$. So
$q^3\equiv 1\pmod{\ell}$. Since $\ell\neq 3$, it follows that $q\not\equiv 1\pmod{\ell}$. Hence,
$\jac[\ell]\subseteq\jac(\FF_{q^3})$ by the non-degeneracy of the Tate-pairing.

The case $(s,t)=(2\sqrt{q},3q)$ follows similarly.
\end{proof}

\bibliographystyle{plain}
\bibliography{references}

\end{document}